 \newtheorem{thm}{Theorem}
 \newtheorem{cor}[thm]{Corollary}
 \newtheorem{lem}[thm]{Lemma}
 \newtheorem{prop}[thm]{Proposition}
 \newtheorem{defn}[thm]{Definition}
 \newtheorem{rem}{Remark}
 \numberwithin{equation}{section}
\begin{document}

\title[Unitarization of non-primitive posets]
 {Unitarization of linear representations of non-primitive posets }

\author{Roman Grushevoi}

\address{%
Institute of Mathematics  NAS of Ukraine\\
Tereschenkivska str. 3\\
01601 Kiev\\
Ukraine}

\email{grushevoi@imath.kiev.ua}
\thanks{This work has been partially supported by the Scientific Program of National Academy
of Sciences of Ukraine, Project No~0107U002333. }

\author{Kostyantyn Yusenko}
\address{%
Institute of Mathematics  NAS of Ukraine\\
Tereschenkivska str. 3\\
01601 Kiev\\
Ukraine}

\email{kay@imath.kiev.ua}
\subjclass{Primary 65N30; Secondary 65N15}

\keywords{unitarizations,  partially ordered sets, subspaces,
representations, semistable representations}

\date{April 11, 2008}

 \maketitle

\section*{0. Introduction}

The representation theory of partially ordered sets (posets) in
linear vector spaces has been studied extensively and  found to be
of great importance for studying indecomposable representations of
group and algebras, Cohen-Macaulay modules and many others
algebraical objects (see \cite{Drozd, GabrielRoiter, Kleiner1,
Kleiner2, Simson1992} and many others). A representation of a given
poset $\mathcal P$ in some vector space $V$ is a collection
$(V;V_i),\ i\in \mathcal P$ of vector subspaces $V_i\subset V$  such
that $V_i\subset V_j$ as soon as $i\prec j$ in $\mathcal P$. Usually
such representations are studied up to equivalence (which is given
by linear bijections between two spaces that bijectively map the
corresponded subspaces). M. Kleiner and L. Nazarova (see
\cite{Kleiner1,Nazarova}) completely classified all posets into
three classes: finite type posets (posets that have finite number of
indecomposable nonequivalent representation), tame posets (posets
that have at most one-parametric family of indecomposable
representations in each dimension) and wild posets (the
classification problem of their indecomposable representations
contains as a subproblem a problem of classification up to conjugacy
classes a pair of two matrices).

It is also possible to develop a similar theory over Hilbert spaces.
By representation we understand a collection $(H;H_i)$ of Hilbert
subspaces in some Hilbert space $H$ such that $H_i\subset H_j$ as
soon as $i\prec j$. The equivalence between two system of Hilbert
subspaces is given by unitary operator which bijectively maps
corresponding subspaces. It turns out that in this case the
classification problem becomes much more harder: even the poset
$\mathcal P=\{a,b_1,b_2\}, b_1\prec b_2$ becomes a $*$-wild poset
(it is impossible to classify all representation of this poset in a
reasonable way see~\cite{KruglyaSamoilenko2}). We add an "extra"
relation
\begin{equation} \label{orthoscalarity}
    \alpha_1P_1+\ldots+\alpha_nP_n=\gamma I,
\end{equation}
between the projections $P_i:H\mapsto H_i$ on corresponding
subspaces for some weight $\chi=(\alpha_1,\ldots,\alpha_n) \in
\mathbb R^n_+$ (this relation will be called \emph{orthoscalarity
condition}). When the system of subspaces is a so-called
$m$-filtration, this relation plays an important role in different
areas of mathematics (see \cite{Totaro1994, Klyachko} and references
therein) and this is actually one of the original motivations to
investigate such representations of posets in Hilbert space.

The interconnection between linear and Hilbert representations of
the posets is given by \emph{unitarization} which asks whether for
given linear representation $(V,V_i)$ it is possible to provide a
hermitian structure in $V$ so that the linear relation
(\ref{orthoscalarity}) holds for some weight $\chi$. In
\cite{GrushevoyYusenko}  for the case when $\mathcal P$ is a
\emph{primitive} poset we proved that a poset $\mathcal P$ is of
finite orthoscalar type (has finitely many irreducible
representations with orthoscalarity condition up to the unitary
equivalence) if and only if it is of finite (linear) type. Also
there were proved that each indecomposable representation of poset
of finite (linear) type could be unitarized with some weight and for
each representation we described all appropriated for unitarization
weights.

In this paper we will prove the same for \emph{non-primitive}
posets. The approach given in \cite{GrushevoyYusenko} does not work
longer for \emph{non-pirmitive} case. We will use instead the notion
of $\chi$-stable representation from Geometric Invariant Theory of
the product of the grassmannians of $V$ (see \cite{Hu2004,
Totaro1994} and references therein). It turns out that for
indecomposable representation $\chi$-unitarizability is essentially
the same as $\chi$-stability. This gives a machinery to compute all
appropriated for unitarization weights.

The main results of the paper are the following theorems.

\begin{thm} \label {mainthmFin}
A partially ordered set $\mathcal P$ has finite number of
irreducible finite-dimensional Hilbert representations with
orthoscalarity condition if and only if it does not contain subsets
of the following form $(1,1,1,1)$, $(2,2,2)$, $(1,3,3)$, $(1,2,5)$,
$(N,4)$, where $(n_1,\ldots,n_s)$ denotes the cardinal sum of
linearly ordered set $\mathcal L_1,\ldots,\mathcal L_s$, whose
orders equal $n_1,\ldots,n_s$, respectively, and $(N,4)$ is the set
$\{a_1, a_2, b_1, b_2, c_1, c_2, c_3, c_4\}$, with the order $a_1
\prec a_2, b_1 \prec b_2, b_1 \prec a_2, c_1 \prec c_2 \prec c_3
\prec c_4$, and no other elements are comparable.
\end{thm}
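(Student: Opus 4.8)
The plan is to deduce the statement from M.~Kleiner's criterion for finite linear representation type together with the coincidence of $\chi$-unitarizability and $\chi$-stability recorded above. By Kleiner's theorem \cite{Kleiner1}, a poset is of finite linear representation type exactly when it contains none of the subsets $(1,1,1,1)$, $(2,2,2)$, $(1,3,3)$, $(1,2,5)$, $(N,4)$, so it is enough to prove that $\mathcal P$ admits, for every admissible weight $\chi$, only finitely many irreducible orthoscalar Hilbert representations up to unitary equivalence if and only if $\mathcal P$ is of finite linear type.

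For the implication ``finite linear type implies finitely many irreducible orthoscalar representations'' I would fix a weight $\chi$ and use the dictionary between orthoscalar structures and stable linear representations. An orthogonally indecomposable $\chi$-orthoscalar representation is $\chi$-stable, so its underlying linear representation is indecomposable; conversely, on a fixed indecomposable linear representation a $\chi$-orthoscalar hermitian structure, if it exists at all, is unique up to unitary equivalence (the Kempf--Ness type statement underlying the equivalence of $\chi$-unitarizability and $\chi$-stability). Hence the irreducible $\chi$-orthoscalar representations of $\mathcal P$ correspond bijectively to the isomorphism classes of $\chi$-stable indecomposable linear representations, and there are at most as many of these as there are indecomposable linear representations of $\mathcal P$ altogether --- a number which is finite, and independent of $\chi$, once $\mathcal P$ is of finite linear type.

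For the converse I would argue contrapositively: assume $\mathcal P$ contains one of the five subsets, call it $\mathcal P_0$. Each of these posets is tame \cite{Nazarova}, so $\mathcal P_0$ carries a one-parameter family $\{M_t\}$ of pairwise non-isomorphic indecomposable linear representations of one and the same (``null-root'') dimension, and there is a single weight $\chi_0$ on $\mathcal P_0$ --- the defect-zero weight --- for which every $M_t$ is $\chi_0$-stable and hence carries a $\chi_0$-orthoscalar structure; this already exhibits infinitely many pairwise inequivalent irreducible orthoscalar representations of $\mathcal P_0$ for the single weight $\chi_0$. I would then inflate them to $\mathcal P$: for $p\in\mathcal P\setminus\mathcal P_0$ put $V_p:=\sum\{V_q : q\in\mathcal P_0,\ q\prec p\}$, which respects the order and turns each $M_t$ into a representation $\bar M_t$ of $\mathcal P$; since any endomorphism of $\bar M_t$ restricts to a scalar endomorphism of $M_t$, the $\bar M_t$ are again indecomposable and pairwise non-isomorphic. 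Extending $\chi_0$ to a weight $\chi$ on $\mathcal P$ by assigning a small positive value $\varepsilon$ to the new points, the strict stability inequalities valid for $M_t$ with respect to $\chi_0$ survive for $\bar M_t$ with respect to $\chi$ once $\varepsilon$ is small enough, and --- because the whole family has a fixed dimension --- one value of $\varepsilon$ works simultaneously for all $t$. Thus every $\bar M_t$ is $\chi$-stable, carries a $\chi$-orthoscalar structure, and we obtain infinitely many pairwise unitarily inequivalent irreducible orthoscalar representations of $\mathcal P$ for the single weight $\chi$; hence $\mathcal P$ is not of finite orthoscalar type.

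The step needing genuine care is the transport in the last paragraph: one must check that the inflation $M_t\mapsto\bar M_t$ does not destroy indecomposability and that a single perturbed weight keeps the entire family stable, i.e.\ that the smallness of $\varepsilon$ can be made uniform in the parameter $t$ (here the fixed dimension of the family is what saves the day). The real work --- the identification of $\chi$-unitarizability with $\chi$-stability and the attendant uniqueness of the orthoscalar hermitian structure --- is assumed from the earlier part of the paper, and once that is in hand the finiteness bookkeeping in the first implication is routine.
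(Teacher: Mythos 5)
Your overall architecture matches the paper's: sufficiency is deduced from Kleiner's theorem together with the fact that linearly equivalent orthoscalar representations with the same weight are unitarily equivalent (you phrase this as Kempf--Ness uniqueness of the balanced metric; the paper quotes the Kruglyak--Nazarova--Roiter theorem), and necessity is obtained by producing, for a critical subposet, infinitely many pairwise non-isomorphic representations that are stable for one common weight and then transporting them to the ambient poset. Your inflation $V_p=\sum\{V_q : q\in\mathcal P_0,\ q\prec p\}$ together with the small-$\varepsilon$ extension of the weight is sound: $\mathrm{End}(\bar M_t)=\mathrm{End}(M_t)$, the stability gap is bounded below uniformly because the dimension vector (hence the finite set of possible subdimension vectors) is fixed, and this is essentially the paper's lemma on adding a subspace with a small weight; in fact you treat the subposet-to-poset passage more explicitly than the paper's own proof of Theorem~1 does.

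The genuine gap is the step you take for granted in the necessity direction: the assertion that each critical poset carries a one-parameter family $\{M_t\}$ in the null-root dimension, all (or cofinitely many) of whose members are stable for a single ``defect-zero'' weight $\chi_0$. Tameness of the critical posets gives one-parameter families of indecomposables, but it gives no stability statement in the subspace-stability sense used here, and there is no standard quotable result for posets playing the role of the defect-zero stability of homogeneous tubes for extended Dynkin quivers. This claim is exactly the technical core of the paper's proof of necessity: the paper constructs the families explicitly for each of the five critical posets (extending a brick representation of a pre-critical poset by a pencil of lines $\langle v_1+\lambda v_2\rangle$), takes the weight equal to the imaginary-root dimension vector, and then verifies stability by exhaustively listing the maximal subdimension vectors (Appendix C), excluding the degenerate parameters $\lambda=0,1$ --- so ``every $M_t$ is $\chi_0$-stable'' is in any case a slight overstatement, though infinitely many suffice. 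Without an argument, or an explicit case-by-case verification, for this common-weight stability claim, your necessity direction is not complete.
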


\begin{thm} \label{mainthmUnit}
   Each indecomposable linear representation of the poset of finite type
can be unitarized with some weight.
\end{thm}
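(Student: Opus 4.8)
The plan is to recast unitarizability as a stability notion from Geometric Invariant Theory and then to reduce the theorem to the non-emptiness of an explicit polyhedral cone of weights.

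\emph{Step 1 (the GIT dictionary).} Fix an indecomposable linear representation $(V;V_i)$, $i\in\mathcal P$, with $\dim V=d$ and $\dim V_i=d_i$, and view it as a point $p$ of the product of Grassmannians $\mathcal G=\prod_{i\in\mathcal P}\mathrm{Gr}(d_i,V)$, acted on by $\mathrm{GL}(V)$. A weight $\chi=(\alpha_1,\ldots,\alpha_n)\in\mathbb R^n_+$ determines (after rescaling to rational entries) an ample $\mathrm{GL}(V)$-linearized line bundle on $\mathcal G$, a tensor product of powers of the Pl\"ucker bundles of the factors; for real $\chi$ one works directly with the moment map $\mu_\chi=\alpha_1P_1+\cdots+\alpha_nP_n-\gamma I$ of the maximal compact $\mathrm U(V)$, where $\gamma=(\sum_i\alpha_i d_i)/d$ is forced by taking traces. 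The Kempf--Ness theorem then identifies the zero set of $\mu_\chi$ along the orbit of $p$ with the set of Hermitian metrics on $V$ satisfying the orthoscalarity relation with weight $\chi$, so the orbit of $p$ carries a unitarizing metric for $\chi$ if and only if $p$ is $\chi$-polystable. Since $(V;V_i)$ is indecomposable of finite type, its endomorphism ring is trivial (a standard consequence of Kleiner's classification), hence the stabilizer of $p$ in $\mathrm{PGL}(V)$ is trivial and $\chi$-polystability coincides with $\chi$-stability. I would record and prove this as a lemma first: \emph{$(V;V_i)$ can be unitarized with weight $\chi$ if and only if $p$ is $\chi$-stable}, citing the Kempf--Ness/Hilbert--Mumford machinery and the moment-map computations of \cite{Totaro1994,Klyachko,Hu2004}.

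\emph{Step 2 (making stability combinatorial).} By the Hilbert--Mumford criterion, $p$ is $\chi$-stable precisely when every proper nonzero subrepresentation $(W;W_i)$, $W_i=W\cap V_i$, satisfies the strict slope inequality
\[
\frac{1}{\dim W}\sum_{i}\alpha_i\dim W_i\;<\;\frac{1}{d}\sum_{i}\alpha_i d_i .
\]
As $d$ is fixed and finite, only finitely many dimension vectors $(\dim W;\dim W_i)$ occur among subrepresentations, so the set $C$ of unitarizing weights is the relatively open rational polyhedral cone in $\mathbb R^n_+$ cut out by these finitely many strict inequalities. By Step~1, Theorem~\ref{mainthmUnit} is therefore equivalent to: \emph{$C\neq\emptyset$ for every indecomposable representation of a finite-type poset.}

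\emph{Step 3 (non-emptiness of $C$).} By Kleiner's criterion a finite-type poset is exactly one avoiding the critical subsets of Theorem~\ref{mainthmFin}; then its Tits form $q_{\mathcal P}$ is weakly positive, and its indecomposables are in bijection with the positive roots, each occurring once and being rigid. In particular $d=(\dim V;\dim V_i)$ is a real Schur root with $q_{\mathcal P}(d)=1$, and the dimension vectors of proper subrepresentations are sums of strictly smaller positive roots, which tightly constrains the inequalities above. I would prove $C\neq\emptyset$ by either (a) running the Nazarova--Roiter differentiation algorithm, which reduces $\mathcal P$ to a smaller finite-type poset, and tracking how a unitarizing weight for the derived representation pulls back, together with a direct check on the finitely many minimal and critical configurations; or (b) starting from the root-theoretic stability parameter $\theta=\langle d,\cdot\rangle-\langle\cdot,d\rangle$ attached to the real Schur root $d$, for which $(V;V_i)$ is $\theta$-stable, and perturbing $\theta$ into $\mathbb R^n_+$ without violating any of the finitely many strict slope inequalities.

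The main obstacle is precisely this positivity adjustment in Step~3: a priori the parameters stabilizing a real Schur root need not be representable by a strictly positive weight $\chi\in\mathbb R^n_+$ with the trace-normalized $\gamma$, and it is here that the hypothesis that $\mathcal P$ is a \emph{poset}, rather than an arbitrary quiver, is essential — the partial order forces every proper subrepresentation to lower the dimension along a comparable ``tail'' of subspaces, and one exploits this, in tandem with the differentiation algorithm, to move the weight into the open positive cone while keeping all slope inequalities strict. Verifying that the perturbed weight remains admissible — and, following the pattern of the primitive case in \cite{GrushevoyYusenko}, describing the whole polytope $C$ of admissible weights for each indecomposable — is where the remaining technical work lies.
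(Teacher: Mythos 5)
Your Steps 1 and 2 reproduce what the paper already establishes in Section 2 (unitarizability of an indecomposable system is equivalent to $\chi$-stability, via the moment map on the product of Grassmannians, following Hu, Totaro and Klyachko), and the reformulation as non-emptiness of a finitely-cut open cone $C$ of weights is correct. The genuine gap is that Step 3 -- which is the entire content of Theorem~\ref{mainthmUnit} -- is never carried out. Neither of your two routes is executed: route (a) would require proving that a unitarizing weight can actually be transported through the Nazarova--Roiter differentiation, which you do not attempt, and route (b) hinges on the assertion that the King-type parameter $\theta=\langle d,\cdot\rangle-\langle\cdot,d\rangle$ attached to the real root both stabilizes the representation and can be perturbed into a strictly positive weight compatible with the trace normalization $\gamma=\frac{1}{\dim V}\sum_i\alpha_i\dim V_i$; you yourself flag this positivity adjustment as the unresolved obstacle, so the proposal ends exactly where the proof has to begin. (A smaller point: identifying polystability with stability needs the brick property, not mere indecomposability; for finite-type posets this is true but should be said, since the paper itself exhibits an indecomposable non-brick for $(1,1,1,1)$.)

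What the paper actually does is quite different and entirely explicit: it introduces \emph{quite sincere} representations (indecomposable, with $\pi(i)\neq 0$, $\pi(i)\neq\pi(0)$, and $\pi(i)\neq\pi(j)$ for $i\prec j$), shows that unitarizing these suffices for all indecomposables of finite type, and classifies all quite sincere posets of finite type -- four primitive ones plus $\mathcal P_1$--$\mathcal P_5$, $\mathcal P_5^*$ -- together with their finitely many quite sincere representations (Appendix A). The primitive cases are quoted from \cite{GrushevoyYusenko}; representations 1)--6) of $\mathcal P_2$ are shown $\chi$-stable with $\chi$ equal to the dimension vector by listing all maximal subdimension vectors (Appendix D) and checking the slope inequalities directly; the remaining cases are handled by two lemmas, namely that adding an arbitrary subspace to a $\chi$-stable system yields a $\tilde\chi$-stable system for a suitably chosen small extra weight $\tilde\chi_{n+1}=R-\epsilon$, and that the dual of a unitarizable system with real-root dimension vector is unitarizable with the same weight (used for $\mathcal P_5^*$). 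If you want to salvage your approach, you would either have to supply the missing perturbation argument in full generality or, as the paper does, replace it by a finite reduction plus case-by-case verification.
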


\textbf{Acknowledgments.} We would like to thank Prof. Yu.S.
Samoilenko and Prof. V.L. Ostrovskii for the statement of the
problem and Prof. Ludmila Turowska for stimulating discussion. The
second author thanks Chalmers University of Technology for
hospitality and for the fruitful environment and Thorsten Weist for
helpful remarks. The second author was partially supported by the
Swedish Institute and by Ukrainian Grant for Young Scientists.

\section{Preliminaries.}

In this section we will briefly recall some basic facts concerning
partially ordered sets, their representations and unitarization of
linear representations.


\subsection {Posets and Hasse quivers}

Let $(\mathcal P,\prec)$ be a finite partially ordered set (or poset
for short) which for us will be $\{a_1,\ldots,a_n\}$. By the width
of the poset $\mathcal P$ we understand the cardinality of the
largest antichain of $\mathcal P$, i.e. the cardinality of a subset
of $\mathcal P$ where any two element are incomparable.

A poset $\mathcal P$ of the width $s$ is called \emph{primitive} and
denoted by $(n_1,\ldots,n_s)$ if this poset is the cardinal sum of
$s$ linearly ordered sets $\mathcal L_1,\ldots,\mathcal L_s$ of
orders $n_1,\ldots,n_s$. Otherwise the poset is called
\emph{non-primitive}.

We will use the standard graphic representations for the poset $\mathcal{P}$
called  \emph{Hasse quiver}. This representation associates to each
elements $x \in \mathcal P$
a vertex $x$ 
and a unique arrow $x\rightarrow y,\ y\in \mathcal P$ if $x\prec y$
and if there is no $z \in \mathcal P$ such that $x\prec z \prec y$.
For example, let $\mathcal P= (N,2)=\{a_1,a_2,b_1,b_2,c_1,c_2\}$
with the following order
$$
    a_1\prec a_2, \quad b_1\prec b_2, \quad c_1\prec c_2, \quad b_1\prec a_2,
$$
then the corresponding Hasse quiver is the following:
%
\begin{center}
\begin{picture}(50,60)
      \put(0,15){$a_1$}
      \put(20,15){$b_1$}
      \put(40,15){$c_1$}
      \put(2,24){\vector(0,1){15}}
      \put(22,24){\vector(0,1){15}}
      \put(42,24){\vector(0,1){15}}
      \put(0,44){$a_2$}
      \put(20,44){$b_2$}
      \put(40,44){$c_2$}
      \put(19,24){\vector(-1,1){15}}
      \put(50,25){$.$}
\end{picture}
\end{center}

\subsection {Linear representations of posets. Indecomposability and Bricks}

\noindent By a linear representation $\pi$ of a given poset
$\mathcal P$ in a complex vector space $V$ we understand a rule that
to each element $i \in \mathcal P$ associates a subspace $V_i
\subseteq V$ in such a way that $i \prec j$ implies $V_i \subseteq
V_j$. We will often think of $\mathcal P$ as a set
$\{1,2,\ldots,n\}$, where $n$ is the cardinality of $\mathcal P$ and
write $\pi=(V;V_1,\ldots,V_n)$ or $\pi=(V;V_i),$ notation
$\pi(i)=V_i$, $\pi_0=V$ also will be used.

 By the dimension vector $d_\pi$ of the representation $\pi$ we
understand a vector $d_\pi=(d_0;d_1,\ldots,d_n)$, where  $d_0=\dim
V$, $d_i=\dim (V_i/\sum_{j\prec i} V_j$). There is qudratic form
$Q_{\mathcal P}$ on $\mathbb Z^{card(\mathcal P)+1}$ given by
$$
    Q_{\mathcal P}(x_0, x_1,\ldots, x_n) =x_0^2+\sum_{i\in \mathcal P} x_i^2+ \sum_{a,b \in \mathcal{P}, a\prec b} x_a x_b - \sum_{a\in \mathcal P} x_0x_a.
$$

 Throughout the paper we  denote by $e_i$ the  $i$-th
coordinate vector $e_i=(\delta_{ij})$, by $e_{i_1\ldots i_k}$ we
understand the vector $e_{i_1}+\ldots+e_{i_k}$, and by $\langle
x_1,\ldots,x_n \rangle$ the complex vector space spanned by vectors
$x_1,\ldots,x_n \in V$. We will use a graphical picture for the
representation of posets. For example the following picture
describes the representation $\pi=(\mathbb C \langle e_1,e_2
\rangle;\langle e_1 \rangle,\langle e_2 \rangle,\langle e_{12}
\rangle)$ for the poset $(1,1,1)$
%
\begin{center}
\begin{picture}(100,70)
      \put(0,15){$\langle e_1 \rangle$}
      \put(40,15){$\langle e_2 \rangle$}
      \put(80,15){$\langle e_{12} \rangle$}
      \put(48,24){\vector(0,1){30}}
      \put(9,24){\vector(1,1){32}}
      \put(90,24){\vector(-1,1){32}}
      \put(45,57){$\mathbb C^2$}
\end{picture}
\end{center}

In fact the set of all linear representations of a poset $\mathcal
P$ forms the additive category $\rm{Rep}(\mathcal P)$, where the set
of morphisms $\rm{Mor}(\pi_1,\pi_2)$ between two representations
$\pi_1=(V;V_1,\ldots,V_n)$ and $\pi_2=(W;W_1,\ldots,W_n)$ consists
of linear maps $C:V \rightarrow W$, such that $C(V_i)\subset W_i$.
Two representations $\pi_1$ and $\pi_2$ of  $\mathcal P$ are
isomorphic (or equivalent) if there exists an invertible morphism $C
\in \rm{Mor}(\pi_1,\pi_2)$, i.e. there exist an invertible linear
map $C:V \rightarrow W$ such that $C(V_i)=W_i$.

 One can define a direct sum $\pi=\pi_1 \oplus \pi_2$ of two objects $\pi_1, \pi_2
\in \mathcal P$ in the following way: $$\pi=(V \oplus W; V_1 \oplus
W_1,\ldots,V_n \oplus W_n).$$ Using the notion of direct sum it is
natural to define \emph{indecomposable} representations as the
representations that are not isomorphic to the direct sum of two
non-zero representations, otherwise representations are called
\emph{decomposable}. It is easy to show that a representation $\pi$
is indecomposable if and only if there is no non-trivial idempotents
in endomorphism ring $\rm{End}(\pi)$. A representation $\pi$ is
called \emph{brick} if there is no non-trivial endomorphism of this
representation (or equivalently when the ring $\rm{End}(\pi)$ is
one-dimensional). It is obvious that if a representation is
\emph{brick} then it is \emph{indecomposable}. But there exist
\emph{indecomposable} representations of  posets which are not
\emph{brick}, for example
$$ A_\alpha= \left(
         \begin{array}{cc}
           1 & \alpha \\
           0 & 1 \\
         \end{array}
       \right), \quad \alpha \in \mathbb R \backslash \{0\},$$
the representation $\pi_{\alpha}=(V;V_1,V_2,V_3,V_4)$ of the poset
$\mathcal N=(1,1,1,1)$
\begin{align*}
    V=\mathbb C^2 \oplus \mathbb C^2; \quad V_1=\mathbb
    C^2 \oplus 0, \quad V_2=0 \oplus \mathbb C^2, \\
    V_3=\left\{ (x,x)\in \mathbb C^4\ |\ x \in \mathbb C^2\right \}, \quad
    V_4=\left\{ (x,A_\alpha x)\in \mathbb C^4\ |\ x \in \mathbb C^2\right \},
\end{align*}
is indecomposable but is not brick.

Recall that a poset $\mathcal P$ is called a poset of \emph{finite
(linear) type} if there exist only finitely many non-isomorphic
indecomposable representation of $\mathcal P$ in the category
$\rm{Rep}(\mathcal P)$. Result obtained by M.M.Kleiner
\cite{Kleiner1}  gives a complete description of the posets of
finite type.

\begin{thm}(see \cite{Kleiner1}, Theorem 1)
   A poset $\mathcal P$ is a set of finite type if and only if
it does not contain subsets of the form $(1,1,1,1)$, $(2,2,2)$,
$(1,3,3)$, $(1,2,5)$ and $(N,4)$, where
\begin{gather*}
    (N,4)=\{a_1,a_2,b_1,b_2,c_1,c_2,c_3,c_4\}, \\ a_1\prec
a_2, \ b_1\prec b_2,\ b_1\prec a_2,\ c_1\prec c_2 \prec c_3 \prec
c_4.
\end{gather*}
\end{thm}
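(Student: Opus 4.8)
The plan is to run the whole argument through the Tits quadratic form $Q_{\mathcal P}$ written down above, by invoking Drozd's criterion (see \cite{Drozd}, or \cite{Simson1992}): $\mathcal P$ is of finite type if and only if $Q_{\mathcal P}$ is \emph{weakly positive}, i.e. $Q_{\mathcal P}(x)>0$ for every nonzero $x\in\mathbb Z^{n+1}_{\geq 0}$; and in that case $Q_{\mathcal P}$ has only finitely many positive roots, each carried by an indecomposable, which re-derives finiteness of the representation type. Granting this, the theorem reduces to the purely combinatorial statement that $Q_{\mathcal P}$ is weakly positive exactly when $\mathcal P$ contains no full subposet isomorphic to one of $(1,1,1,1)$, $(2,2,2)$, $(1,3,3)$, $(1,2,5)$, $(N,4)$.

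For the necessity direction I would first record a monotonicity: if $\mathcal P'$ is a full (induced) subposet of $\mathcal P$ then, since comparability in $\mathcal P'$ is the restriction of that in $\mathcal P$, the form $Q_{\mathcal P'}$ is obtained from $Q_{\mathcal P}$ by setting to zero the coordinates outside $\mathcal P'\cup\{0\}$; hence a positive vector showing $Q_{\mathcal P'}$ is not weakly positive, extended by zeros, shows the same for $Q_{\mathcal P}$. It then suffices to exhibit for each of the five posets a positive isotropic vector. These are the null roots of the corresponding affine diagrams: $(2;1,1,1,1)$ for $(1,1,1,1)\leftrightarrow\widetilde D_4$, $(3;1,1,1,1,1,1)$ for $(2,2,2)\leftrightarrow\widetilde E_6$, $(4;2,1,1,1,1,1)$ for $(1,3,3)\leftrightarrow\widetilde E_7$, $(6;3,2,2,1,1,1,1,1)$ for $(1,2,5)\leftrightarrow\widetilde E_8$, and $(5;2,1,1,2,1,1,1,1)$ for $(N,4)$; in each case $Q_{\mathcal P}=0$ by a one-line check (e.g.\ $4+4+0-8=0$ for the first). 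The very same vectors are the dimension vectors of the one-parameter families of indecomposables (such as the family $\pi_\alpha$ of $(1,1,1,1)$ displayed above), so the ``infinitely many indecomposables'' conclusion can also be obtained directly.

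The substance is the converse: if $\mathcal P$ avoids all five posets then $Q_{\mathcal P}$ is weakly positive. I would argue by contradiction through \emph{critical} posets --- those whose form is not weakly positive but whose every proper full subposet has weakly positive form; since a non-weakly-positive $Q_{\mathcal P}$ would contain a critical full subposet, it is enough to show the critical posets are precisely the five. By the monotonicity each of the five is non-weakly-positive, and a finite bounded computation confirms that all their proper full subposets have weakly positive forms, so the five are critical. Conversely one classifies the critical posets: width $\geq 4$ forces a $4$-element antichain, i.e.\ a full subposet $(1,1,1,1)$, so by minimality the poset equals $(1,1,1,1)$; chains and width-$2$ posets yield only weakly positive forms; and the width-$3$ case is the heart of the matter --- presenting $\mathcal P$ as a union of three chains and using that the critical vector is sincere with $Q_{\mathcal P}\leq 0$ to bound the chain lengths and the cross-comparabilities, thereby forcing $(2,2,2)$, $(1,3,3)$, $(1,2,5)$ or $(N,4)$. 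This case analysis, though elementary, is the lengthy and delicate step, and it is the main obstacle. The alternative, which is Kleiner's own route \cite{Kleiner1} via the Nazarova--Roiter differentiation algorithm \cite{Nazarova}, replaces it by choosing a suitable maximal element (or pair), passing to the derived poset $\mathcal P^{\partial}$ through a functor that is an equivalence of $\mathrm{Rep}(\mathcal P)$ and $\mathrm{Rep}(\mathcal P^{\partial})$ modulo finitely many trivial representations, checking that the five-poset list is stable under differentiation while a complexity invariant strictly drops, and inducting down to a short hand-verified list of small posets --- but the combinatorial bookkeeping there plays the same role of the principal difficulty.
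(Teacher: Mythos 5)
The paper does not prove this statement at all --- it is imported verbatim from Kleiner's 1972 paper as a known classification --- so there is no internal argument to compare yours against. Your route through Drozd's criterion (finite type $\Leftrightarrow$ $Q_{\mathcal P}$ weakly positive) is the standard modern alternative to Kleiner's original differentiation argument, and the parts you actually execute are correct: setting to zero the coordinates outside a full subposet $\mathcal P'\cup\{0\}$ does turn $Q_{\mathcal P}$ into $Q_{\mathcal P'}$, and each of the five nonnegative vectors you list is indeed isotropic for the corresponding form (they are exactly the dimension vectors $d^{\mathcal P}$ that the paper itself uses in Section 3 to build the one-parameter families $\tilde\pi_\lambda$). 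One small misattribution: the family $\pi_\alpha$ for $(1,1,1,1)$ displayed in Section 1 has dimension vector $(4;2,2,2,2)$, i.e.\ twice the null root, so it is not an instance of ``the very same vectors''; the families in dimension $(2;1,1,1,1)$ are the ones constructed in Section 3.

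The genuine gap is that the ``if'' half of the theorem is not actually proved. You reduce it to the claim that the critical posets (minimal full subposets with non-weakly-positive Tits form) are precisely the five listed, but you then defer both halves of that claim: the verification that every proper full subposet of each of the five has weakly positive form is dismissed as ``a finite bounded computation'' without specifying a finite criterion for weak positivity, and the converse classification --- above all the width-$3$ case analysis --- is explicitly set aside as ``the main obstacle.'' That case analysis (or the differentiation induction you mention as an alternative) is the entire mathematical content of Kleiner's theorem; without it, and with Drozd's criterion itself invoked rather than proved, what you have is a correct and well-organized outline of a known proof strategy rather than a proof. Since the paper treats the statement as a citation, this is arguably an acceptable level of detail here, but it should not be mistaken for a self-contained argument.
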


\begin{rem}
The posets in previous theorem will be called critical henceforth.
\end{rem}

Kleiner described also all indecomposable representations of posets
of finite type up to equivalence using the notion of \emph{sincere}
representations (see~\cite{Kleiner2}).

\begin{defn}  We call a representation $\pi$ \emph{sincere}
if it is indecomposable and the components of the dimension vector
$d_\pi=(d_0;d_1,\ldots,d_n)$ satisfy $d_i\not=0;$ otherwise we say
that the representation is \emph{degenerated}.
\end{defn}

A poset is called \emph{sincere} if it has at least one
\emph{sincere} representation. It is easy to see that any
indecomposable representation of a poset of finite type actually is
a sincere representation of its some sincere subposet. To describe
all indecomposable representations of a fixed poset $\mathcal P$ of
finite type one needs to describe all sincere representations of its
all sincere subposets $\mathcal R$ (including itself if $\mathcal P$
is sincere) and to add zero spaces $V_i/\sum_{j\prec i} V_j$ if
$i\not\in \mathcal R$.

\subsection{Unitary representations of posets}

In the spirit of a number of previous articles  we study
representation theory of posets over Hilbert spaces. Denote by
$\rm{Rep}(\mathcal P, H)$ a sub-category in $\rm{Rep}(\mathcal P)$,
defined as follows: its set of objects consists of
finite-dimensional Hilbert spaces and two objects $\pi=(H,H_i)$ and
$\tilde{\pi}=(\tilde H,\tilde H_i)$ are equivalent in
$\rm{Rep}(\mathcal P, H)$ if there exists a unitary operator
$U:H\rightarrow\tilde H$ such that $U(H_i)= \tilde H_i$ (unitary
equivalent). Representation $\pi\in\rm{Rep}(\mathcal P, H)$ is
called \emph{irreducible} iff the $C^*$-algebra generated by set of
orthogonal projections $\{P_i\}$ on the subspaces $\{H_i\}$ is
irreducible. Let us remark that indecomposability of a
representation $\pi$ in $\rm{Rep}(\mathcal P)$ implies
irreducibility of $C^*(\{P_i, i\in\mathcal P\})$ but the converse is
false.

The problem of classification  all irreducible objects in the
category $\rm{Rep}(\mathcal P, H)$ becomes much harder. Even for the
primitive poset $\mathcal P=(1,2)$ it is hopeless to describe in a
reasonable way all its irreducible representations: indeed this lead
us to classify up to unitary equivalence three subspace in a Hilbert
space, two of which are orthogonal, but it is well-known due
to~\cite{KruglyaSamoilenko2} that such problem is $*$-wild. Hence it
is natural to consider some additional relation.

Let us consider those objects $\pi \in \rm{Rep}(\mathcal P, H)$,
$\pi=(H;H_1,\ldots,H_n)$, for which  the following linear relation
holds:
\begin{equation}
    \alpha_1P_1+\ldots+\alpha_nP_n=\gamma I, \label{linear_rel}
\end{equation}
where $\alpha_i,\gamma$ are some positive real numbers, and $P_i$
are the orhoprojections on the subspaces $H_i$. These objects form a
category which will be also denoted by $\rm{Rep}(\mathcal P, H)$.
Such representations will be called \emph{orthoscalar
representations}.


\subsection{Unitarization}

Obviously there exists a forgetful functor from $\rm{Rep}(\mathcal
P,H)$ to $\rm{Rep}(\mathcal P)$ which maps each system of Hilbert
spaces to its underlying system of vector spaces. We ask whether
there exists "functor in reverse direction''?

\begin{defn}
We say that a given representation $\pi \in \rm{Rep}(\mathcal P)$,
$\pi=(V;V_1,\ldots,V_n)$ of the poset $\mathcal P$ can be unitarized
with a weight (or is unitarizable) $\chi=(\alpha_1,\ldots,\alpha_n)$
if it is possible to choose hermitian structure
$\langle\cdot,\cdot\rangle_{\mathbb C}$ in $V$, so that the
corresponding projections $P_i$ onto subspace $V_i$ satisfy the
following relation:
$$
    \alpha_1P_1+\ldots+\alpha_nP_n=\lambda_\chi(\pi) I,
$$
where $\lambda_\chi(\pi)$ is equal to $\frac{1}{\dim
V}\sum_{i=1}^n\alpha_i \dim V_i$.
\end{defn}

For a given linear representation $\pi$ of the poset $\mathcal P$ by
$\triangle_\pi^{\mathcal P}$ we denote the set of those weights
$\chi$ that are appropriated for unitarization. And correspondingly
we say that representation $\pi$ can be \emph{unitarized} if the set
$\triangle_\pi^{\mathcal P}$ is nonempty.

In \cite{GrushevoyYusenko} we showed that each indecomposable
non-degenerated representation of primitive poset of finite type can
be unitarized and for each such representation we completely
described the sets $\triangle_\pi^{\mathcal P}$. The approach
provided in \cite{GrushevoyYusenko} does not work longer for
\emph{non-pirmitive} case. In this paper we will use rather
different approach which comes from Geometric Invariant Theory and
gives exact criteria for unitarization.

\section{Balanced metric and stable representations of posets}

Let $\pi=(V;V_1,\ldots,V_n)$ be a system of subspaces (in particular
it can be a representation of some poset $\mathcal P$) in a complex
vector space $V$ and let $\chi=(\alpha_1,\ldots,\alpha_n)$ be some
weight, i.e. the vector from $\mathbb R^n_+$. Denote by
$\lambda_\chi(\pi)$ the number defined by
$$
    \lambda_\chi(\pi)=\frac{1}{\dim V}\sum_{i=1}^n \alpha_i \dim V_i.
$$
If $U$ is a subspace of $V$ one can form another system of subspaces
$\pi \cap U$ generated by $\pi$ and $U$
$$
    \pi \cap U=(U;V_1\cap U,\ldots,V_n \cap U).
$$
By $\lambda_\chi(\pi \cap U)$ we will understand the number given by
$$
    \lambda_\chi(\pi \cap U)=\frac{1}{\dim U}\sum_{i=1}^n \alpha_i \dim (V_i\cap U).
$$

\begin{defn}
    We say that a system of subspace $\pi=(V;V_1,...,V_n)$ is
$\chi$-stable if for each proper subspace $U\subset V$ the following
inequality holds
    $$
        \lambda_\chi(\pi\cap U)<\lambda_\chi(\pi).
    $$
\end{defn}

Suppose for a moment that for a system $\pi=(V;V_1,\ldots,V_n)$ we
have chosen a sesquilinear form $\langle\cdot,\cdot\rangle$ on $V$
so that
$$
    \chi_1 P_{V_1}+\ldots+\chi_n P_{V_n}=\lambda_\chi(\pi)I_V,
$$ following \cite{Hu2004} we will call such form
\emph{$\chi$-balanced} metric. For a system $\pi$ to possess a
$\chi$-balanced metric is essentially the same as to be unitarized
with the weight $\chi$. The list of necessarily restrictions on
weight $\chi$ can be obtained by the following lemma
\begin{lem}
    If the indecomposable system of subspaces $\pi$ possesses $\chi$-balanced metric then $\pi$ is $\chi$-stable.
\end{lem}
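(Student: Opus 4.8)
The plan is to use the $\chi$-balanced metric to turn the $\chi$-stability inequality into a statement about operator traces. Suppose $\pi=(V;V_1,\ldots,V_n)$ admits a hermitian structure $\langle\cdot,\cdot\rangle$ for which $\sum_i \alpha_i P_{V_i}=\lambda_\chi(\pi) I_V$, where $P_{V_i}$ is the orthogonal projection onto $V_i$. Let $U\subset V$ be any nonzero proper subspace and let $Q$ denote the orthogonal projection onto $U$. Taking $\langle\cdot,\cdot\rangle$-traces of the compression $Q\left(\sum_i \alpha_i P_{V_i}\right)Q$ restricted to $U$ gives
$$
    \sum_{i=1}^n \alpha_i \operatorname{tr}(Q P_{V_i} Q) = \lambda_\chi(\pi)\dim U.
$$
So the whole point is to compare $\operatorname{tr}(QP_{V_i}Q)$ with $\dim(V_i\cap U)$.

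The key inequality is $\operatorname{tr}(QP_{V_i}Q)\ge \dim(V_i\cap U)$, which holds for any two orthogonal projections. Indeed, $QP_{V_i}Q$ is a positive operator on $U$ with all eigenvalues in $[0,1]$, and the eigenvalue $1$ occurs exactly on the subspace $\{u\in U: P_{V_i}u=u\}=V_i\cap U$; hence its trace is at least the multiplicity of that eigenvalue, namely $\dim(V_i\cap U)$. (Equivalently, $\operatorname{tr}(QP_{V_i}Q)=\operatorname{tr}(P_{V_i}QP_{V_i})\ge\operatorname{tr}$ of the projection onto $V_i\cap U$ computed inside $V_i$.) Combining this with the trace identity above yields
$$
    \lambda_\chi(\pi)\dim U = \sum_{i=1}^n \alpha_i \operatorname{tr}(QP_{V_i}Q) \ge \sum_{i=1}^n \alpha_i \dim(V_i\cap U) = \lambda_\chi(\pi\cap U)\dim U,
$$
so $\lambda_\chi(\pi\cap U)\le\lambda_\chi(\pi)$ for every proper $U$, giving at least the non-strict form of stability.

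The remaining, and genuinely delicate, step is to upgrade $\le$ to the strict inequality $<$ using indecomposability. If equality held for some proper nonzero $U$, then equality must hold in $\operatorname{tr}(QP_{V_i}Q)\ge\dim(V_i\cap U)$ for every $i$ with $\alpha_i>0$, i.e. for every $i$; equality there forces $QP_{V_i}Q$ to be the orthogonal projection onto $V_i\cap U$, which in turn forces $P_{V_i}$ to commute with $Q$, i.e. $V_i=(V_i\cap U)\oplus(V_i\cap U^\perp)$ for all $i$. Writing $V=U\oplus U^\perp$ (orthogonal complement with respect to the balanced metric) then exhibits $\pi$ as a direct sum $(\pi\cap U)\oplus(\pi\cap U^\perp)$ of two nonzero subrepresentations, contradicting indecomposability. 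I expect this commutation/decomposition argument to be the main obstacle: one has to check carefully that equality in the trace bound really does give $[P_{V_i},Q]=0$ (which is where positivity and the eigenvalue-$1$ characterization are used) and that the resulting splitting is a splitting in $\operatorname{Rep}(\mathcal P)$, i.e. respects all the containments $V_i\subseteq V_j$ — but this is automatic since intersecting with $U$ preserves inclusions. Hence $\pi$ is $\chi$-stable.
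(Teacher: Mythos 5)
Your proof is correct and follows essentially the same route as the paper: compress the balanced-metric identity $\sum_i\alpha_i P_{V_i}=\lambda_\chi(\pi)I$ to a proper subspace $U$, take traces, use $\operatorname{tr}(P_{V_i}P_U)\ge\dim(V_i\cap U)$, and rule out equality by showing it forces $[P_{V_i},P_U]=0$ for all $i$ and hence an orthogonal splitting of $\pi$, contradicting indecomposability. Your eigenvalue-$1$ justification of the trace inequality and of the commutation in the equality case simply spells out what the paper compresses into ``spectral theorem for the pair of projections.''
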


\begin{proof}
The proof of these statement can be obtained by taking the trace
from linear equation. Indeed let $\pi=(V;V_1,\ldots,V_n)$ possesses
a $\chi$-balanced metric, i.e. a metric $\langle\cdot,\cdot\rangle$
on $V$ such that
\begin{equation} \label{chi-m}
    \alpha_1 P_{V_1}+\ldots+\alpha_n P_{V_n}=\lambda_\chi(\pi)I_V.
\end{equation}
By taking the trace from the left and right hand sides of
(\ref{chi-m}) we can write  $\lambda_\chi(\pi)$ as
$$
\lambda_\chi(\pi)=\frac{tr(\alpha_1 P_{V_1}+\ldots+\alpha_n
P_{V_n})}{tr(I_V)}=\frac{1}{tr(I_V)}\sum_{i=1}^n\alpha_i
tr(P_{V_i}).
$$
Let $U$ be some proper subspace of $V$. Denote by $P_U$ an
orthogonal projection on $U$. Multiplying (\ref{chi-m}) by $P_U$
from the left we obtain
$$
    \chi_1 P_{V_1}P_U+\ldots+\chi_n P_{V_n}P_U=\lambda_\chi(\pi)P_U,
$$
then taking trace of the last we get
$$
    \lambda_\chi(\pi)=\frac{1}{tr(P_U)}\sum_{i=1}^n\alpha_i tr(P_{V_i}P_U).
$$
Observe that $tr(P_{V_i}P_U)\geq tr(P_{V_i\cap U})$ (this can be
proved using spectral theorem for the pair of projection and then by
restriction to two-dimensional representation). This gives us
$$
   \lambda_\chi(\pi)=\frac{1}{tr(P_U)}\sum_{i=1}^n\alpha_i tr(P_{V_i}P_U)\geq \frac{1}{tr(P_U)}\sum_{i=1}^n\alpha_i tr(P_{V_i\cap
U})=\lambda_\chi(\pi\cap U).
$$
It remains to prove that the inequality is strict. Indeed assume
that $tr(P_{V_i}P_U)=tr(P_{V_i\cap U})$ for all $i$. Then it is easy
to see the all $P_{V_i}$ commutes with $P_U$ (again using spectral
theorem for the pair of two projections) hence the subspace $U$ is
invariant with respect to projections $P_i$ which means that the
representation $\pi$ is decomposable. Therefore
$\lambda_\chi(\pi\cap U) < \lambda_\chi(\pi)$ for all proper
subspaces $U$, and hence $\pi$ is $\chi$-stable.
\end{proof}

A natural question arises whether the reverse statement is true,
i.e. does every $\chi$-stable system $\pi$ possesses $\chi$-balanced
metric?

When $\pi$ is a collection of filtrations (recall that a filtration
is a chain of subspaces $V_0\subset\ldots\subset V_m )$ this
assertion was proved by Totaro (\cite{Totaro1994}) and Klyachko
(\cite{Klyachko}). If fact this can be proved for any configuration
of subspaces and any weight $\chi$. Here we will reproduce shortly
what was done in \cite{Hu2004}.

Let $V$ be a complex vector space and let $\chi \in \mathbb N^n$.
Consider the product of Grassmanians
$$
    \textrm{Gr}(k_1,V)\times\ldots\times\textrm{Gr}(k_n,V).
$$
Any system of subspaces $\pi=(V;V_1,\ldots,V_n)$ of vector space $V$
with dimension vector equal to $d=(\dim V;k_1,\ldots,k_n)$ can be
considered as a point of $\prod_{i=1}^{n}\textrm{Gr}(k_i,V)$. We
equip $\prod_{i=1}^{n}\textrm{Gr}(k_i,V)$ with simplectic form
$\delta$, which is the skew bilinear form
\begin{equation*}
\begin{split}
\delta&:\prod_{i=1}^{n}\textrm{Gr}(k_i,V) \times
\prod_{i=1}^{n}\textrm{Gr}(k_i,V) \rightarrow \mathbb C \\
    \delta&:(\pi,\tilde \pi)\mapsto\sum_i \chi_i tr(A_i \tilde A_i^*),
\end{split}
\end{equation*}
where $A_i$ and $\tilde A_i$ is a matrix representation of $V_i$ and
$\tilde V_i$ (their columns form an orthonormal bases for $V_i$ and
$\tilde V_i$ correspondingly), and $*$ is adjoint correspondingly to
standart hermitian metric $\langle\cdot,\cdot\rangle$ on $V$.

As Lie group $SU(V)$ acts diagonally on
$\prod_{i=1}^{m}\textrm{Gr}(k_i,V)$ preserving symplectic form
$\sigma$, the action is given by operating on $V$ (via its linear
representation). The corresponding moment map
$\Phi:\prod_{i=1}^{m}\textrm{Gr}(k_i,V) \rightarrow su^*(V)$.
$su^*(V)$ the dual of Lie algebra of $SU(V)$ which is given by the
algebra of traceless Hermitian matrices over $V$. This moment map is
given by
$$
    \Phi(\pi)=\sum_i\chi_i A_iA_i^*-\lambda_\chi(\pi)I,
$$

Assuming that $\pi$ is $\chi$-stable  is possible to find (see
\cite{Hu2004}) such $g \in SL(V)$ that $\Phi(g\cdot\pi)=0$. Then
correspondently to hermitian metric
$g\langle\cdot,\cdot\rangle=\langle g \cdot, g \cdot\rangle$ the
following holds
$$
    \sum_i \chi_i A_i A_i^*=\lambda_\omega(\pi) I.
$$
Taking into account that $A_iA_i^*$ is an orthogonal projection on
$V_i$ correspondinly to $g\langle\cdot,\cdot\rangle$ we get
desirable result.

To conclude it remains to note that for  $\chi \in \mathbb R^n_+$
one can find appropriated sequence of rational $\chi_n$ that tends
to $\chi$ and that $\pi$ is $\chi_n$-stable (it is possible because
stable condition is open) and then one can make use (for example)
Shulman's lemma about representation of limit relation.

Summing up the following theorem holds (\cite{Hu2004}).

\begin{thm} Let $\chi$ be the weight. For the indecomposable system of subspaces $\pi$
the following conditions are equivalent:
\begin{enumerate}[label=(\roman{*})]
\item $\pi$  can be unitarized with weight $\chi$;
\item $\pi$ is $\chi$-stable system.
\end{enumerate}

\end{thm}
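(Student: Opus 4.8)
The implication $(i)\Rightarrow(ii)$ is already in hand: by definition ``unitarizable with weight $\chi$'' means exactly that $V$ carries a hermitian form for which $\sum_i\alpha_i P_{V_i}=\lambda_\chi(\pi)I$, i.e. a $\chi$-balanced metric, and the Lemma above shows that an indecomposable system with a $\chi$-balanced metric is $\chi$-stable. So the entire content of the theorem is the converse $(ii)\Rightarrow(i)$, which I would prove by the Geometric Invariant Theory route sketched before the statement, organised into three stages. (Note that $\chi$-stability of $\pi$ already forces $\pi$ to be indecomposable, since if $\pi=\pi'\oplus\pi''$ then $\lambda_\chi(\pi)$ is a weighted average of $\lambda_\chi(\pi')$ and $\lambda_\chi(\pi'')$ and taking $U$ to be the underlying space of the summand with larger slope violates stability; thus the indecomposability hypothesis is used only for $(i)\Rightarrow(ii)$.)

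\emph{Stage 1: reduction to integral weights.} The defining inequalities $\lambda_\chi(\pi\cap U)<\lambda_\chi(\pi)$ depend on $U$ only through the finitely many combinatorial data $(\dim U;\dim(V_1\cap U),\dots,\dim(V_n\cap U))$, and for fixed such data they are strict linear inequalities in $\chi\in\mathbb R^n_+$. Hence $\chi$-stability is an open condition on $\chi$, so if $\pi$ is $\chi$-stable for $\chi\in\mathbb R^n_+$ it is $\chi^{(m)}$-stable for a sequence of rational weights $\chi^{(m)}\to\chi$, which after clearing denominators we may take in $\mathbb N^n$. Granting unitarizability for each integral $\chi^{(m)}$, pass to the limit via Shulman's lemma on representations of a relation obtained as a limit of relations to get unitarizability with $\chi$. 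It therefore suffices to treat $\chi\in\mathbb N^n$.

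\emph{Stage 2: the GIT picture and the Hilbert--Mumford translation.} For integral $\chi$ realise a system $\pi$ with dimension vector $(\dim V;k_1,\dots,k_n)$ as a point of $X:=\prod_{i=1}^n\mathrm{Gr}(k_i,V)$, and linearise the $SL(V)$-action by $\bigotimes_{i=1}^n\mathrm{pr}_i^{*}\mathcal O_{\mathrm{Gr}(k_i,V)}(\chi_i)$. The key step is to show that GIT-stability of $\pi\in X$ for this linearisation is precisely $\chi$-stability in the sense of the Definition. This is a Hilbert--Mumford computation: a one-parameter subgroup $\lambda$ of $SL(V)$ determines a weight filtration of $V$, the Mumford weight $\mu(\pi,\lambda)$ expands as a combination over the steps $U$ of that filtration of the slope defects $\lambda_\chi(\pi)-\lambda_\chi(\pi\cap U)$, and the trace-zero normalisation defining $SL(V)$ (rather than $GL(V)$) is exactly what converts ``$\mu(\pi,\lambda)\ge 0$ with equality only for the trivial filtration'' into the strict inequality for all proper $U$. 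I would record the outcome as: $\pi\in X$ is $SL(V)$-stable for the $\chi$-linearisation if and only if $\pi$ is $\chi$-stable (this is Theorem 2.2 of \cite{Hu2004}).

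\emph{Stage 3: Kempf--Ness.} The $\chi$-linearisation carries the Kähler form $\delta$ with moment map $\Phi(\pi)=\sum_i\chi_i A_iA_i^{*}-\lambda_\chi(\pi)I$, where the columns of $A_i$ form an orthonormal basis of $V_i$ for a fixed background metric. By Kempf--Ness applied to the $SU(V)$-action, GIT-stability of $\pi$ yields $g\in SL(V)$ with $\Phi(g\cdot\pi)=0$, and uniqueness of the zero-fibre orbit modulo $SU(V)$ corresponds to essential uniqueness of the $\chi$-balanced metric. Unwinding: in the hermitian form $g\langle\cdot,\cdot\rangle=\langle g\cdot,g\cdot\rangle$ the operator $A_iA_i^{*}$ is the orthogonal projection $P_{V_i}$, so $\sum_i\chi_i P_{V_i}=\lambda_\chi(\pi)I$, which by the formula for $\lambda_\chi(\pi)$ is exactly the unitarization relation, giving $(i)$. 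The main obstacle is Stage 2 --- the bookkeeping that makes the Hilbert--Mumford weight come out as the subspace-slope inequality, with the $\det=1$ constraint accounting exactly for the strictness on proper subspaces; Stage 3 is then a black-box invocation of Kempf--Ness (equivalently, the result of \cite{Hu2004}) and Stage 1 is routine.
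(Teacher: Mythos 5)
Your proposal is correct and follows essentially the same route as the paper: the forward implication is the preceding Lemma on $\chi$-balanced metrics, and the converse is the moment-map/GIT argument of Hu (existence of $g\in SL(V)$ with $\Phi(g\cdot\pi)=0$ for a stable point, then reading off $\sum_i\chi_i P_{V_i}=\lambda_\chi(\pi)I$ in the metric $\langle g\cdot,g\cdot\rangle$), with the passage from rational to real weights handled by openness of stability and Shulman's lemma exactly as in the text. Your added observation that $\chi$-stability already forces indecomposability, and the explicit Hilbert--Mumford bookkeeping in Stage 2, are refinements the paper leaves implicit (citing Theorem 2.2 of Hu) but do not change the argument.
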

This theorem gives exact criteria of unitarization of any linear
indecomposable representation of partially ordered set with the
weight $\chi$. On practice in order to check the $\chi$-stability
for a given representation $\pi=(V;V_1,\ldots,V_n)$ of some poset
with dimension vector $d=(\dim V_0;\dim V_1,\ldots,\dim V_n)$ one
can describe all possible subdimension vectors $d^\prime=(\dim
U;\dim(V_1\cap U),\ldots,\dim(V_n\cap U))$, $U\subset V$ and to
check for these vector stability condition
$$
    \frac{1}{d^\prime_0}\sum\chi_i d^\prime_i<\frac{1}{d_0}\sum\chi_i
    d_i,
$$
Let us remark that on subdimension vectors there exist a natural
coordinate partial order. That is evident that to check the
stability condition one should check inequality above for maximal
vectors.

\section{Proof of the Theorem~\ref{mainthmFin}}

\

\emph{Sufficiently.} Let $\mathcal P$ be the poset which does not
contain any of critical posets. Assume that these posets could have
infinite number of unitary inequivalent Hilbert space
representations with fixed weight. If two Hilbert space
representations with the same weight are unitary non-equivalent then
they  are linearly inequivalent (due to
~\cite{KruglyaNazarovaRoiter}, Theorem 1). Hence each such poset has
infinite number of indecomposable linear representation. But this
contradicts Kleiner's theorem.

\emph{Necessity.} Our aim is for each critical poset $\mathcal P$ to
build infinite series of indecomposable pairwise nonequivalent
Hilbert representations of this poset with the same weight. For the
primitive case this can be done using the connection between Hilbert
orthoscalar representations of the posets with the representations
of some certain class of $*$-algebras that connected with
star-shaped graphs (see for example \cite{OstrovskyiSamoilenko}).
But this approach does not work for the nonprimitive case (namely
for the set $(N,4)$). Here we consider quite different approach
which is based on unitarization.

Let $\mathcal P$ be a poset and let $I \subset \mathcal P$ ($I$ can
be empty). Define an extended poset $\mathcal {\tilde P}_I$ by
adding to $\mathcal P$ an element $\tilde p$ subject to the
relations $\tilde p \prec i$, $i\in I$. Let $\pi=(V;V_i)$ be a
representation of the poset $\mathcal P$. Assume that there are two
linearly independent vectors $v_1,v_2\in V$ such that the following
conditions are satisfied
$$
\dim((\sum_{i\in I} V_i + \langle v_1+\lambda v_2\rangle) \cap
(\sum_{i\in I} V_i + \langle v_1+\tilde \lambda
v_2\rangle))=\dim(\sum_{i\in I} V_i),\quad
$$
where $\lambda\neq\tilde\lambda \in D$, and $D$ is dense in $\mathbb
C$. One can show that such vectors exist if $\dim(\sum_{i\in I}
V_i)+1<\dim(V)$. We can define a family of representation
$\tilde\pi_{\lambda}$ of the $\mathcal {\tilde P}_{I}$, by letting
$$
    \tilde \pi_\lambda(x)=\left\{
                    \begin{array}{c}
                      \pi(x), \quad x\neq\tilde p, \\
                      \sum_{i\in I}\pi(i)+ \langle v_1+\lambda v_2\rangle, \quad x=\tilde p. \\
                    \end{array}
                  \right.
$$

The following proposition is straightforward.

\begin{prop}
    Assume that $\pi=(V;V_i)$ is a brick representation of the poset $\mathcal P$ and $\tilde \pi_\lambda(\tilde p)$
    is the corresponding family of representations of $\mathcal P_I$ for appropriated choosen $v_1,v_2 \in V$. Then the following holds
    \begin{enumerate}
        \item The representation $\tilde\pi _\lambda$ is brick ($End(\tilde\pi_\lambda)\cong\mathbb C$) for
        each $\lambda \in \mathbb C$.

        \item If $\lambda\neq\lambda^\prime$ then $\tilde\pi_\lambda$ is not equivalent to $\tilde\pi_{\lambda^\prime}$.

        \item If $\lambda \neq
        \lambda^\prime$ and $\pi_\lambda$, $\pi_{\lambda^\prime}$ are $\chi$-stable  then the corresponding systems of projection
        (after unitarization) are unitary inequivalent.
    \end{enumerate}
\end{prop}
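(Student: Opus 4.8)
The strategy is to push everything through the bricking hypothesis $\mathrm{End}(\pi)\cong\mathbb{C}$: any morphism among the $\tilde\pi_\lambda$ restricts, on the subspaces indexed by $\mathcal P$, to a morphism of $\pi$, hence is a scalar operator $c\,I_V$, and a scalar operator fixes every line through the origin. First I would fix notation: put $W=\sum_{i\in I}V_i$ and recall that, by the choice of $v_1,v_2$, their images $\bar v_1,\bar v_2$ in $V/W$ are linearly independent — this is what the condition $\dim W+1<\dim V$ makes possible and what underlies the displayed intersection identity (independence of $\bar v_1,\bar v_2$ forces $(W+\langle v_1+\lambda v_2\rangle)\cap(W+\langle v_1+\tilde\lambda v_2\rangle)=W$ for all $\lambda\neq\tilde\lambda$). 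I would then record two consequences at the outset: (a) $\dim\tilde\pi_\lambda(\tilde p)=\dim\bigl(W+\langle v_1+\lambda v_2\rangle\bigr)=\dim W+1$ for every $\lambda$; and (b) the assignment $\lambda\mapsto W+\langle v_1+\lambda v_2\rangle$ is injective, since $[\bar v_1+\lambda\bar v_2]\neq[\bar v_1+\lambda'\bar v_2]$ in $\mathbb P(V/W)$ whenever $\lambda\neq\lambda'$, so by (a) already $\tilde\pi_\lambda(\tilde p)\subseteq\tilde\pi_{\lambda'}(\tilde p)$ forces $\lambda=\lambda'$. Everything below is formal once (a) and (b) are available, so this bit of linear-algebra bookkeeping about $v_1,v_2$ — the only place the ``density'' part of the construction is really used — is the single point that needs care; I do not expect a genuine obstacle beyond it.

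For statement (1): an endomorphism $C$ of $\tilde\pi_\lambda$ is, in particular, a linear map with $C(V_i)\subseteq V_i$ for all $i\in\mathcal P$, i.e.\ an endomorphism of $\pi$; since $\pi$ is a brick, $C=c\,I_V$. Conversely every scalar $c\,I_V$ also satisfies $C\bigl(\tilde\pi_\lambda(\tilde p)\bigr)=\tilde\pi_\lambda(\tilde p)$, so $\mathrm{End}(\tilde\pi_\lambda)=\mathbb C\,I_V\cong\mathbb C$; in particular $\tilde\pi_\lambda$ is indecomposable, which I will reuse in (3). For statement (2): if $C\colon V\to V$ is an isomorphism $\tilde\pi_\lambda\to\tilde\pi_{\lambda'}$, then $C$ is invertible and $C(V_i)\subseteq V_i$ for all $i\in\mathcal P$, hence $C(V_i)=V_i$ by a dimension count, so $C$ is an automorphism of the brick $\pi$ and therefore $C=c\,I_V$ with $c\neq0$. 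Then $C$ carries $\tilde\pi_\lambda(\tilde p)=W+\langle v_1+\lambda v_2\rangle$ onto itself, so the morphism condition at $\tilde p$ becomes $W+\langle v_1+\lambda v_2\rangle\subseteq W+\langle v_1+\lambda' v_2\rangle$, which by (b) forces $\lambda=\lambda'$, contradicting $\lambda\neq\lambda'$. Hence $\tilde\pi_\lambda\not\cong\tilde\pi_{\lambda'}$.

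For statement (3): assume $\lambda\neq\lambda'$ and that both $\tilde\pi_\lambda$ and $\tilde\pi_{\lambda'}$ are $\chi$-stable. By (1) they are indecomposable, so the equivalence theorem (for an indecomposable system, unitarizable with weight $\chi$ $\Longleftrightarrow$ $\chi$-stable) furnishes hermitian structures on $V$ for which the orthoscalarity relation holds with weight $\chi$; denote the resulting objects of $\mathrm{Rep}(\mathcal P_I,H)$ by $\Pi_\lambda$ and $\Pi_{\lambda'}$. If $U$ were a unitary equivalence between $\Pi_\lambda$ and $\Pi_{\lambda'}$, then $U$ would in particular be an invertible linear map with $U\bigl(\tilde\pi_\lambda(x)\bigr)=\tilde\pi_{\lambda'}(x)$ for all $x\in\mathcal P_I$, i.e.\ a linear isomorphism $\tilde\pi_\lambda\cong\tilde\pi_{\lambda'}$, which is impossible by (2). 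Therefore the two systems of orthogonal projections obtained after unitarization are unitarily inequivalent, establishing (3).
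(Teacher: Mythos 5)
Your proof is correct, and it is exactly the routine verification the paper has in mind: the paper states the proposition without proof ("straightforward"), and your argument — reduce every morphism to an endomorphism of the brick $\pi$, hence a scalar, and then use that $\lambda\mapsto\sum_{i\in I}V_i+\langle v_1+\lambda v_2\rangle$ is injective because $v_1,v_2$ are independent modulo $\sum_{i\in I}V_i$ — is the intended one. You also correctly isolate the one point needing care, namely that the "appropriately chosen" $v_1,v_2$ must be read as independent modulo $\sum_{i\in I}V_i$ (which is what the displayed intersection identity and the condition $\dim(\sum_{i\in I}V_i)+1<\dim V$ encode), and your deduction of (3) from (2) via the fact that a unitary equivalence of the unitarized systems is in particular a linear equivalence is sound.
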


Using the construction above for each pre-critical poset (that is
critical poset without one element) we will build its extended poset
that coincide with critical poset and we will choose appropriated
representation of pre-critical what allows to build extended
representations of critical posets. These family of extended
representation are given in dimension $d^\mathcal P$ which is the
imaginary root of the corresponding quadratic form $Q_\mathcal{P}$,
$Q_\mathcal{P}(d^\mathcal P)=0$. We will prove that these
representations are stable for all $\lambda \in \mathbb C, \lambda
\neq0,1$ (see Appendix B for the description of subdimension
vectors) with the same weight $\chi^\mathcal{P}$ which is defined by
the dimension vector in the following way:
$\chi^\mathcal{P}_i=d^\mathcal{P}_i$ and
$\lambda_\chi=d^\mathcal{P}_0$.

\emph{1) Case $(1,1,1,1)$}.

For $\mathcal P=(1,1,1)$, $I=\emptyset$ the extended poset
$\mathcal{\tilde P}_I$ is equal to $(1,1,1,1)$ and for the
representation $\pi=(\mathbb C^2;\langle e_1\rangle;\langle
e_2\rangle; \langle e_1+e_2\rangle)$ its extended representation has
the following form.


\begin{center}
\begin{picture}(150,20)
      \put(0,10){$\langle e_1 \rangle$}
      \put(30,10){$\langle e_2 \rangle$}
      \put(60,10){$\langle e_1+e_2 \rangle$}
      \put(110,10){$\langle e_1+\lambda e_2 \rangle$}
\end{picture}
\end{center}

The dimension vector is equal to $d^{(1,1,1,1)}=(2;1;1;1;1)$ and the
representations are $(1,1,1,1)$--stable.

\emph{2) Case $(2,2,2)$.}

For $\mathcal P=(1,2,2)$, $I=\{a_1\}$ the extended poset
$\mathcal{\tilde P}_I$ is equal to $(2,2,2)$ and for the
representation $\pi=(\mathbb C^3;\langle e_{123}\rangle;\langle
e_2\rangle, \langle e_1,e_2\rangle;\langle e_2\rangle, \langle
e_2,e_3\rangle)$ its extended representation has the following form


\begin{center}
\begin{picture}(150,51)
      \put(30,5){$\langle e_{123} \rangle$}
      \put(97,5){$\langle e_1 \rangle$}
      \put(145,5){$\langle e_3 \rangle$}
      \put(42,16){\vector(0,1){25}}
      \put(105,16){\vector(0,1){25}}
      \put(153,16){\vector(0,1){25}}
      \put(10,45){$\langle e_{123}, e_1+\lambda e_3 \rangle$}
      \put(90,45){$\langle e_1, e_2 \rangle$}
      \put(138,45){$\langle e_2, e_3 \rangle$}
\end{picture}
\end{center}

The dimension vector is equal to $d^{(2,2,2)}=(3;1,1;1,1;1,1)$ and
the representations are $(1,1,1,1,1,1)$--stable.

\emph{3) Case $(1,3,3)$.}

For $\mathcal P=(1,2,3)$, $I=\{b_2\}$ the extended poset
$\mathcal{\tilde P}_I$ is equal to $(1,3,3)$ and for the
representation $\pi=(\mathbb C^4;\langle
e_{123},e_{24}\rangle;\langle e_4\rangle, \langle e_1,e_4\rangle;
\langle e_3\rangle,\langle e_2,e_3\rangle, \langle
e_1,e_2,e_3\rangle)$ its extended representation has the following
form

\begin{center}
\begin{picture}(150,91)
      \put(15,5){$\langle e_{123}, e_{24} \rangle$}
      \put(92,5){$\langle e_4 \rangle$}
      \put(155,5){$\langle e_3 \rangle$}
      \put(100,16){\vector(0,1){25}}
      \put(163,16){\vector(0,1){25}}
      \put(85,45){$\langle e_1, e_4 \rangle$}
      \put(148,45){$\langle e_2, e_3 \rangle$}
      \put(100,56){\vector(0,1){25}}
      \put(163,56){\vector(0,1){25}}
      \put(60,85){$\langle e_1, e_4, e_2+\lambda e_3 \rangle$}
      \put(143,85){$\langle e_1, e_2, e_3 \rangle$}
\end{picture}
\end{center}

Dimension vector is equal to $d^{(1,3,3)}=(4;2;1,1,1;1,1,1)$ and the
representations are $(2,1,1,1,1,1,1)$--stable.

\emph{4) Case $(1,2,5)$.}

For $\mathcal P=(1,2,4)$, $I=\{c_4\}$ the extended poset
$\mathcal{\tilde P}_I$ is equal to $(1,2,5)$ and for the
representation
\begin{multline*}
\pi=(\mathbb C^6;\langle e_{123},e_{245},e_{16}\rangle;\langle
e_5,e_6\rangle, \langle e_1,e_2,e_5,e_6\rangle;\\ \langle
e_4\rangle,\langle e_3,e_4\rangle, \langle
e_2,e_3,e_4\rangle,\langle e_1,e_2,e_3,e_4\rangle)
\end{multline*}
 its extended
representation has the following form

%

\begin{center}
\begin{picture}(150,155)
      \put(15,5){$\langle e_{123},e_{245},e_{16}\rangle$}
      \put(99,5){$\langle e_5, e_6 \rangle$}
      \put(165,5){$\langle e_4 \rangle$}
      \put(113,16){\vector(0,1){20}}
      \put(173,16){\vector(0,1){20}}
      \put(85,40){$\langle e_1, e_2, e_5, e_6 \rangle$}
      \put(158,40){$\langle e_3, e_4 \rangle$}
      \put(173,51){\vector(0,1){20}}
      \put(153,75){$\langle e_2, e_3, e_4 \rangle$}
      \put(173,86){\vector(0,1){20}}
      \put(145,110){$\langle e_1, e_2, e_3, e_4 \rangle$}
      \put(173,121){\vector(0,1){20}}
      \put(125,145){$\langle e_1, e_2, e_3, e_4, e_5+\lambda e_6 \rangle$}
\end{picture}
\end{center}

 In this case the dimension vector is equal to
$d^{(1,2,5)}=(6;3;2,2;1,1,1,1,1)$ and the representations are
$(3,2,2,1,1,1,1,1)$--stable.

\emph{5) Case $(N,4)$.}

For $\mathcal P=(1,2,4)$, $I=\{a_1,b_1\}$ its extended poset
$\mathcal{\tilde P}_I$ is equal to $(N,4)$ and for the
representation $$\pi=(\mathbb C^5;\langle
e_{235},e_{134}\rangle;\langle e_5\rangle, \langle
e_1,e_2,e_5\rangle;\langle e_4\rangle, \langle
e_3,e_4\rangle,\langle e_2,e_3,e_4\rangle,\langle
e_1,e_2,e_3,e_4\rangle)$$ its extended representations has the
following form


\begin{center}
\begin{picture}(150,120)
      \put(28,5){$\langle e_{235}, e_{134}\rangle$}
      \put(125,5){$\langle e_5 \rangle$}
      \put(180,5){$\langle e_4 \rangle$}
      \put(50,16){\vector(0,1){20}}
      \put(133,16){\vector(0,1){20}}
      \put(188,16){\vector(0,1){20}}
      \put(0,40){$\langle e_{235},e_{134}, e_5, e_3+\lambda e_4\rangle$}
      \put(112,40){$\langle e_1, e_2, e_5\rangle$}
      \put(173,40){$\langle e_3, e_4 \rangle$}
      \put(188,51){\vector(0,1){20}}
      \put(168,75){$\langle e_2, e_3, e_4 \rangle$}
      \put(188,86){\vector(0,1){20}}
      \put(160,110){$\langle e_1, e_2, e_3, e_4 \rangle$}
      \put(129,16){\vector(-4,1){75}}
\end{picture}
\end{center}

The dimension vector is equal to $d^{(\mathcal
N_4)}=(5;2,1;1,2;1,1,1,1)$ and the representations are
$(2,1,1,2,1,1,1,1)$--stable.

So, for each critical posets $\mathcal{ \tilde P}$ we build the
infinite family of pairwise inequivalent $d^{\mathcal P}$--stable
brick representation $\pi_\lambda$ in the dimension $d^{\mathcal P}$
(see appendix for the proof of stability). Hence each critical poset
has infinite number of pairwise unitary inequivalent representations
satysfying
$$
    d^{\mathcal P}_1 P_1+\ldots+d^{\mathcal P}_n P_n=d^{\mathcal P}_0
    I,
$$
i.e. each critical posets has infinite Hilbert space representable
type. This completes the proof of Theorem~\ref{mainthmFin}.


\begin{rem} Theorem 1 can be reformulated in the following way ---  a poset has finite orthoscalar Hilbert type if and only if
it has finite linear type.
\end{rem}

\section{Quite sincere representations and the proof of Theorem~\ref{mainthmUnit}}


\subsection{Quite sincere representations}

To describe all irreducible orthoscalar representations of posets of
finite type we need a new notion of sincere representation which we
will call quite sincerity.

The following is for linear representations of posets.

\begin{defn}  Let $\mathcal P=\{1,\ldots,n\}$ be a poset.
We call a representation $\pi$ of $\mathcal P$ \emph{quite sincere}
if it is indecomposable and the following conditions holds for all
$i=1,\dots,n$:
\begin{itemize}
  \item $\pi(i)\not=0;$
  \item $\pi(i)\not=\pi(0)$;
  \item $\pi(i)\not=\pi(j)$ as \ $i\prec j$.
\end{itemize}
\end{defn}

\begin{defn}  We say that the poset is \emph{quite sincere} if it
has at least one \emph{quite sincere} representation.
\end{defn}
The following theorem describes all quite sincere posets of finite
type and gives all their quite sincere representations.

\begin{thm} The set of quite sincere posets of finite type consists of four primitive posets $(1,1,1)$, $(1,2,2)$, $(1,2,3)$, $(1,2,4)$ and following non-primitive posets:
\begin{center}
\begin{picture}(50,60)
      \put(0,15){$a_1$}
      \put(20,15){$b_1$}
      \put(40,15){$c_1$}
      \put(2,24){\vector(0,1){15}}
      \put(22,24){\vector(0,1){15}}
      \put(42,24){\vector(0,1){15}}
      \put(0,44){$a_2$}
      \put(20,44){$b_2$}
      \put(40,44){$c_2$}
      \put(19,24){\vector(-1,1){15}}
      \put(48,30){;}
      \put(15,0){$\mathcal P_1$}
\end{picture}\quad
\begin{picture}(50,75)
      \put(0,15){$a_1$}
      \put(20,15){$b_1$}
      \put(40,15){$c_1$}
      \put(2,24){\vector(0,1){15}}
      \put(22,24){\vector(0,1){15}}
      \put(42,24){\vector(0,1){15}}
      \put(0,44){$a_2$}
      \put(20,44){$b_2$}
      \put(40,44){$c_2$}
      \put(42,52){\vector(0,1){15}}
      \put(40,71){$c_3$}
      \put(19,24){\vector(-1,1){15}}
      \put(48,30){;}
      \put(15,0){$\mathcal P_2$}
\end{picture}\quad
\begin{picture}(50,75)
      \put(0,15){$a_1$}
      \put(20,15){$b_1$}
      \put(40,15){$c_1$}
      \put(22,24){\vector(0,1){15}}
      \put(42,24){\vector(0,1){15}}
      \put(20,44){$b_2$}
      \put(40,44){$c_2$}
      \put(22,52){\vector(0,1){15}}
      \put(42,52){\vector(0,1){15}}
      \put(20,71){$b_3$}
      \put(40,71){$c_3$}
      \put(39,24){\vector(-1,3){14}}
      \put(48,30){;}
      \put(15,0){$\mathcal P_3$}
\end{picture}\quad
\begin{picture}(50,90)
      \put(0,15){$a_1$}
      \put(20,15){$b_1$}
      \put(40,15){$c_1$}
      \put(2,24){\vector(0,1){15}}
      \put(22,24){\vector(0,1){15}}
      \put(42,24){\vector(0,1){15}}
      \put(0,44){$a_2$}
      \put(20,44){$b_2$}
      \put(40,44){$c_2$}
      \put(22,52){\vector(0,1){15}}
      \put(42,52){\vector(0,1){15}}
      \put(20,71){$b_3$}
      \put(40,71){$c_3$}
      \put(19,24){\vector(-1,1){15}}
      \put(4,24){\vector(1,3){14}}
      \put(48,30){;}
      \put(15,0){$\mathcal P_4$}
\end{picture}\quad
\begin{picture}(50,110)
      \put(0,15){$a_1$}
      \put(20,15){$b_1$}
      \put(42,15){$c_1$}
      \put(2,24){\vector(0,1){15}}
      \put(22,24){\vector(0,1){15}}
      \put(44,24){\vector(0,1){15}}
      \put(0,44){$a_2$}
      \put(20,44){$b_2$}
      \put(42,44){$c_2$}
      \put(44,52){\vector(0,1){15}}
      \put(40,71){$c_3$}
      \put(44,80){\vector(0,1){15}}
      \put(42,98){$c_4$}
      \put(24,24){\vector(1,4){17.5}}
      \put(20,24){\vector(-1,1){15}}
      \put(50,30){;}
      \put(15,0){$\mathcal P_5$}
\end{picture}\quad
\begin{picture}(50,110)
      \put(0,15){$a_1$}
      \put(20,15){$b_1$}
      \put(40,15){$c_1$}
      \put(2,24){\vector(0,1){15}}
      \put(22,24){\vector(0,1){15}}
      \put(42,24){\vector(0,1){15}}
      \put(0,44){$a_2$}
      \put(20,44){$b_2$}
      \put(40,44){$c_2$}
      \put(42,52){\vector(0,1){15}}
      \put(40,71){$c_3$}
      \put(42,80){\vector(0,1){15}}
      \put(40,98){$c_4$}
      \put(4,24){\vector(1,1){15}}
      \put(39,24){\vector(-1,1){15}}
      \put(48,30){.}
      \put(15,0){$\mathcal P_5^*$}
\end{picture}
\end{center}
Complete list of all quite sincere representations of these posets
are given in the Appendix A.
\end{thm}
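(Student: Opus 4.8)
The plan is to reduce the statement to Kleiner's description of sincere representations of posets of finite type. The first step is a simplification of the notion of quite sincerity: the third defining condition, $\pi(i)\neq\pi(j)$ for $i\prec j$, is automatic for a \emph{sincere} representation. Indeed, if $i\prec j$ and $V_i=V_j$, then $V_i$ is one of the summands of $\sum_{k\prec j}V_k$, so $V_j=V_i\subseteq\sum_{k\prec j}V_k\subseteq V_j$; hence $\sum_{k\prec j}V_k=V_j$ and $d_j=\dim(V_j/\sum_{k\prec j}V_k)=0$, contradicting sincerity. Thus a representation is quite sincere precisely when it is sincere and $V_i\neq V$ for all $i$, and a poset is quite sincere precisely when it admits such a representation. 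In particular every quite sincere poset is sincere, so a quite sincere poset of finite type belongs to Kleiner's explicit finite list of sincere posets of finite type (\cite{Kleiner1,Kleiner2}); for each such poset $\mathcal P$ that work also supplies the complete (finite) list of its sincere representations, their dimension vectors being exactly the sincere positive roots of the Tits form $Q_{\mathcal P}$, together with explicit realizations.

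The core of the argument is then a direct inspection of that list. For each sincere poset $\mathcal P$ of finite type and each of its sincere representations $\pi=(V;V_1,\dots,V_n)$ one computes the dimensions $\dim V_i$ and checks whether any of them equals $\dim V$. It suffices to test the maximal elements of $\mathcal P$: if $V_i=V$ for some $i$ then $V_j=V$ for every $j\succeq i$, so $\pi$ is quite sincere iff $V_i\subsetneq V$ at every maximal element. Carrying this check through all sincere posets of finite type, the posets that admit at least one surviving representation are exactly the four primitive posets $(1,1,1)$, $(1,2,2)$, $(1,2,3)$, $(1,2,4)$ and the six non-primitive posets $\mathcal P_1,\dots,\mathcal P_5,\mathcal P_5^{*}$; every other sincere poset of finite type is eliminated, because in each of its sincere representations at least one $V_i$ is forced to coincide with $V$ (for chains, and more generally for any poset with a unique maximal element, this is immediate, since otherwise $\sum_i V_i\subsetneq V$ and $\pi$ decomposes). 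For the primitive part of this computation one can appeal to \cite{GrushevoyYusenko}.

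For the reverse inclusion it remains to exhibit one quite sincere representation of each of the ten posets, which is done by the representations already singled out above: for instance $(1,1,1)$ is quite sincere via $(\mathbb C^2;\langle e_1\rangle,\langle e_2\rangle,\langle e_{12}\rangle)$, and $(1,2,2)$, $(1,2,3)$, $(1,2,4)$ via the representations used in the proof of Theorem~\ref{mainthmFin}. Finally, for each of the ten posets one records all of its sincere representations passing the $V_i\subsetneq V$ test; these are the complete lists of quite sincere representations collected in Appendix A.

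The main obstacle is the bookkeeping of the second paragraph rather than anything conceptual: one needs Kleiner's full list of sincere posets of finite type together with all their sincere representations, and then a careful, case-by-case evaluation of the subspace dimensions at the maximal vertices. The reduction and Kleiner's input lists do all the real work, but the verification must be performed with care, since overlooking or misreading one sincere representation would add or drop a poset from the conclusion — and it is this verification that produces the tables of Appendix A.
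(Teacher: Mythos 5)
Your opening reduction contains a genuine error that breaks the whole argument. It is true that sincerity forces $\pi(i)\neq\pi(j)$ for $i\prec j$, so a sincere representation with all $\pi(i)\neq\pi(0)$ is quite sincere; but the converse you assert --- that a quite sincere representation is sincere --- is false. The condition $\pi(i)\neq\pi(j)$ for $i\prec j$ is strictly weaker than $d_j\neq 0$ as soon as $j$ covers two incomparable elements: one can have $\pi(j)=\sum_{i\prec j}\pi(i)$, i.e.\ $d_j=0$, while still $\pi(j)\neq\pi(i)$ for every single $i\prec j$. This is exactly what happens for the non-primitive posets in the statement: in the Appendix~A representation of $\mathcal P_1$ one has $\pi(a_2)=\langle e_1,e_2+e_3\rangle=\pi(a_1)+\pi(b_1)$, so $d_{a_2}=0$ and the representation is not sincere, yet it is quite sincere. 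Consequently the claim that every quite sincere poset is sincere is wrong: apart from $\mathcal P_2$, none of $\mathcal P_1,\mathcal P_3,\mathcal P_4,\mathcal P_5,\mathcal P_5^{*}$ is a sincere poset of finite type, so they do not occur in Kleiner's list at all, and the case-by-case inspection of that list in your second paragraph cannot produce them. Under your equivalence the answer would consist only of the primitive posets and $\mathcal P_2$, which contradicts the very statement you set out to prove; the bookkeeping you describe is therefore not merely laborious but impossible as formulated.

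What is missing is the mechanism by which non-sincere quite sincere representations arise, and this is the heart of the paper's proof. There one argues a dichotomy: either $\pi$ is sincere with $\pi(i)\neq\pi(0)$ (then Kleiner's list applies), or there exists $k$ with $\pi(k)=\sum_{i\prec k}\pi(i)$; in the second case deleting $k$ gives a quite sincere representation of the smaller poset, and one inducts down to a sincere representation of some subposet. Conversely, from a quite sincere representation $\pi$ of $\mathcal P$ and any $I\subset\mathcal P$ with $\sum_{i\in I}\pi(i)\neq\pi(0)$ one builds a quite sincere representation of the extended poset $\mathcal{\tilde P}_I$ by adjoining $\tilde p$ with $\tilde\pi(\tilde p)=\sum_{i\in I}\pi(i)$. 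Iterating this deletion/extension procedure, starting from the sincere posets $(1,1,1)$, $(1,2,2)$, $(1,2,3)$, $(1,2,4)$, $\mathcal P_2$ and their sincere representations with $\pi(i)\neq\pi(0)$, is what generates $\mathcal P_1,\mathcal P_3,\mathcal P_4,\mathcal P_5,\mathcal P_5^{*}$ and the tables of Appendix~A (termination coming from the bound on dimensions). To repair your proof you would have to add exactly this step; the part of your argument that survives is the easy direction, namely that sincere representations with $\pi(i)\neq\pi(0)$ are quite sincere.
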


\begin{proof}
Let $\pi$ be a quite sincere representation of a poset $\mathcal P$.
The following two possibilities can occur
\begin{itemize}
\item $\pi$ is a sincere representation of $\mathcal P$ with $\pi(i)\neq\pi(0)$;
\item there is $k\in\mathcal P$ such that $\pi(k)=\sum_{i\prec k}\pi(i)$.
\end{itemize}
In the first case  the representation (and obviously the
corresponding poset) is in Kleiner's list of sincere posets and
their representations~\cite{Kleiner2}. In the second the
representation $\pi$ generates an indecomposable representation of
poset the $\mathcal P_1=\mathcal P\setminus {k}$ which we denote by
$\pi_1$. It is clear that $\pi_1$ is a quite sincere representation
of $\mathcal P_1$ and it again satisfies one of the two above
possibilities. Proceeding in this way we will obtain a sincere
representation of some poset $\mathcal P_k$ with condition
$\pi(i)\neq\pi(0)$ which is in Kleiner's list.

Summing up we have the following algorithm for calculation of all
quite sincere posets and their quite sincere representations:
\begin{enumerate}
\item all sincere posets which have a sincere representation
such that $\pi(i)\neq\pi(0)$ are quite sincere and Kleiner's list
gives all. These posets are $(1,1,1)$, $(1,2,2)$, $(1,2,3)$,
$(1,2,4)$, $\mathcal{P}_2$ with representations listed in Appendix A
(for $\mathcal{P}_2$ marked with
*);

\item
Let $\mathcal P$ be a quite sincere poset and and $I\subset \mathcal
P$ such that $\sum_{i\in I}\pi(i)\neq \pi(0)$. Let $\mathcal {\tilde
P}_I$ be the corresponding extended poset defined as in Section 3.
Let $\tilde \pi_I$ be a representation of $\mathcal {\tilde P}_I$
given by $\tilde\pi_I(j)=\pi(j)$ for all $j\in\mathcal P$ and
$\tilde\pi_I(\tilde p)=\sum_{i\in I}\pi(i)$. This is evident that
these representations are quite sincere representations of
corresponding posets $\mathcal {\tilde P}_I$.
\end{enumerate}
In this way by induction one can obtain all quite sincere posets and
all their quite sincere representations. The above procedure
terminates because the dimensions of representations are bounded.
\end{proof}

\begin{rem} Let us remark that the unitarization of a quite sincere
representation is equivalent to unitarization of all indecomposable
representation of poset of finite type.
\end{rem}

\subsection{Proof of Theorem~\ref{mainthmUnit}}

As it was mentioned before in order to prove that all primitive
posets of finite type can be unitarized it is enough to see that all
quite sincere posets and their quite sincere representations are
unitarized. All such representations are listed in Appendix A
(second column in the table). The fact that all quite sincere
representation of primitive poset (from Appendix A) can be
unitarized with some weight is due to \cite{GrushevoyYusenko}.

To prove that quite sincere representation 1)-6) of the poset
$\mathcal P_2$ from Appendix A can be unitarized we show their
stability with the the weight which is equal to the dimension of
representation. For this one can  describe all possible subdimension
for each representation and then check the stability condition.
Description of all possible subdimension is straightforward (this is
done in Appendix D) and this is a routine to check the stability
condition for all these vectors.

Representations of $\mathcal P_1, \mathcal P_3-\mathcal P_5$ and
representations $7)$, $8)$ of $\mathcal P_2$ can be obtained by
adding one (or several) subspace(s) to either a representation of
primitive posets or one of the representation $1)-6)$ of $\mathcal
P_2$. Their unitarization follows from the following observation.

\begin{lem}
    Let $\pi=(V;V_i)$ be a $\chi$-stable system of subspaces. Then for
    each subspace $U\subset V$ there exists a weight $\tilde \chi$
    such that the system of subspaces $\tilde \pi$ generated by $\pi$ and $U$,
    $$\tilde \pi=(V;V_1,\ldots,V_n,U)$$  is $\tilde
    \chi$-stable.
\end{lem}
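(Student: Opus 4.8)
The plan is to perturb the given weight $\chi$ by adding a small positive mass $\varepsilon$ on the new subspace $U$ and argue that stability is preserved for $\varepsilon$ small enough. Concretely, set $\tilde\chi = (\alpha_1,\ldots,\alpha_n,\varepsilon)$ for a parameter $\varepsilon > 0$ to be chosen. For any proper subspace $W \subset V$, the stability inequality for $\tilde\pi$ reads
$$
\frac{1}{\dim W}\Bigl(\sum_{i=1}^n \alpha_i \dim(V_i\cap W) + \varepsilon \dim(U\cap W)\Bigr) < \frac{1}{\dim V}\Bigl(\sum_{i=1}^n \alpha_i \dim V_i + \varepsilon \dim U\Bigr),
$$
which can be rewritten as
$$
\lambda_\chi(\pi\cap W) - \lambda_\chi(\pi) + \varepsilon\Bigl(\frac{\dim(U\cap W)}{\dim W} - \frac{\dim U}{\dim V}\Bigr) < 0.
$$
Since $\pi$ is $\chi$-stable, the quantity $\delta(W) := \lambda_\chi(\pi) - \lambda_\chi(\pi\cap W)$ is strictly positive for every proper $W$, while the coefficient of $\varepsilon$ is bounded in absolute value by $1$ (indeed by $\dim(U\cap W)/\dim W \le 1$ and $\dim U/\dim V < 1$ when $U \ne V$; the case $U = V$ is handled separately since then adding $U$ changes nothing essential, or one simply notes $\dim(U\cap W)/\dim W = 1 > \dim U/\dim V$ forces nothing because $\delta(W)>0$ already dominates).

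The key point is that there are only finitely many values of $\delta(W)$. This is because $\delta(W)$ depends only on the subdimension vector $d' = (\dim W; \dim(V_1\cap W),\ldots,\dim(V_n\cap W))$, and these vectors range over a finite set (bounded coordinatewise by $d = (\dim V; \dim V_1,\ldots,\dim V_n)$); this finiteness was already invoked in the discussion following the unitarization criterion in Section~2. Hence $\delta_0 := \min\{\delta(W) : W \subsetneq V\} > 0$ exists. Choosing any $\varepsilon$ with $0 < \varepsilon < \delta_0$ then forces
$$
\lambda_\chi(\pi\cap W) - \lambda_\chi(\pi) + \varepsilon\Bigl(\tfrac{\dim(U\cap W)}{\dim W} - \tfrac{\dim U}{\dim V}\Bigr) \le -\delta_0 + \varepsilon < 0
$$
for all proper $W$, which is exactly $\tilde\chi$-stability of $\tilde\pi$.

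The only mild subtlety — and the step I would be most careful about — is to make sure that the relevant subspaces for testing $\tilde\pi$-stability are still arbitrary proper subspaces $W \subset V$ (the ambient space has not changed, only an extra subspace $U$ has been appended), so that the finiteness of $\{\delta(W)\}$ genuinely applies and no new, uncontrolled family of test subspaces appears. Once this is observed, the argument is just the uniform-gap estimate above. One should also note that $\tilde\chi \in \mathbb{R}^{n+1}_+$ as required, since all $\alpha_i > 0$ and $\varepsilon > 0$; if integrality of the weight were needed one could instead scale $\chi$ by a large integer and add a single unit of mass on $U$, but the criterion in Section~2 already permits real weights via the approximation argument given there.
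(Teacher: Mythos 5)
Your proposal is correct and is essentially the paper's own argument: the paper likewise takes the minimal stability gap $R$ over all proper subspaces (positive by finiteness of subdimension vectors), assigns the new subspace the weight $R-\epsilon$, and verifies the perturbed stability inequality by the same uniform-gap estimate. The only cosmetic difference is that you phrase the new weight as a small $\varepsilon<\delta_0$ rather than $R-\epsilon$, which changes nothing.
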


\begin{proof}
    Let $K\subset V$ be a subspace of $V$ that the difference
    $\lambda_\chi(\pi)-\lambda_\chi(\pi \cap K)$ is minimal.
    Let $R=\lambda_\chi(\pi)-\lambda_\chi(\pi \cap K)$. Since $\pi$ is
    stable, $R>0$ and there exist $\epsilon>0$ that
    $R-\epsilon>0$. Define $\tilde \chi$ in the folowing way
    $$
        \tilde \chi_i=\chi_i, \quad i=1,\dots,n, \quad
        \tilde \chi_{n+1}=R-\epsilon.
    $$
    Our claim is that $\tilde \pi$ is $\tilde\chi$-stable. Indeed,
    let $M\subset V$ be an arbitrary subspace of $V$ then we have
    \begin{equation*}
    \begin{split}
        \frac{1}{\dim M}\sum_{i=1}^{n+1}\tilde \chi_i \dim
        (\tilde \pi(i)\cap M)&=\frac{1}{\dim M}\sum_{i=1}^{n}\chi_i \dim
        (V_i\cap M)+\frac{\tilde\chi_{n+1} \dim(U\cap M)}{\dim M}\\
        &\leq \frac{1}{\dim
        V}\sum_{i=1}^{n}\chi_i\dim V_i-R+ \frac{ (R-\epsilon) \dim(U\cap M)}{\dim
        M}\\
        & < \frac{1}{\dim
        V}\sum_{i=1}^{n}\chi_i\dim V_i < \frac{1}{\dim
        V}\sum_{i=1}^{n+1}\tilde \chi_i\dim \tilde \pi(i).
    \end{split}
    \end{equation*}
    Hence $\tilde \pi$ is $\tilde\chi$-stable.
\end{proof}

\begin{cor}
    If indecomposable system of subspaces $\pi=(V;V_1,\ldots,V_n)$
    is unitarizable then for arbitrary collections of
    subspaces $U_j \subset V$, $j=1,\ldots,m$ the system
    $\pi=(V;V_1,\ldots,V_n,U_1,\ldots,U_m)$ is also unitarizable.
\end{cor}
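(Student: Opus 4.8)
The plan is to reduce the corollary to the preceding Lemma by a straightforward induction on the number $m$ of extra subspaces. The case $m=0$ is exactly the hypothesis that $\pi$ is unitarizable. For the inductive step I would assume that $\rho=(V;V_1,\ldots,V_n,U_1,\ldots,U_{m-1})$ is unitarizable and deduce the same for $\tilde\rho=(V;V_1,\ldots,V_n,U_1,\ldots,U_m)$.

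First I would record that indecomposability is preserved when the list of subspaces is enlarged: any endomorphism of $\tilde\rho$ is in particular an endomorphism of $\pi$, so $\mathrm{End}(\tilde\rho)$ is a subring of $\mathrm{End}(\pi)$; since $\pi$ is indecomposable it has no non-trivial idempotents, hence neither does $\mathrm{End}(\tilde\rho)$, so $\tilde\rho$ (and likewise $\rho$) is indecomposable. Next, by the equivalence between unitarizability and $\chi$-stability for indecomposable systems (the theorem at the end of Section~2), the unitarizability of $\rho$ yields a weight $\chi\in\mathbb R^{\,n+m-1}_+$ with respect to which $\rho$ is $\chi$-stable. I would then apply the preceding Lemma to the $\chi$-stable system $\rho$ and the subspace $U=U_m\subset V$, obtaining a weight $\tilde\chi\in\mathbb R^{\,n+m}_+$ for which $\tilde\rho$ is $\tilde\chi$-stable. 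Since $\tilde\rho$ is indecomposable, the same equivalence theorem converts $\tilde\chi$-stability back into unitarizability with weight $\tilde\chi$, which closes the induction.

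There is essentially no genuine obstacle: all the work is already contained in the Lemma (the explicit construction of the perturbed weight $\tilde\chi$ keeping stability) and in the GIT equivalence theorem, and the corollary is just their iteration. The only point worth stating explicitly is the stability of indecomposability under adjunction of subspaces, since it is what allows the equivalence theorem to be invoked at each step of the induction.
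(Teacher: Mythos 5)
Your proposal is correct and follows essentially the same route as the paper, which leaves the corollary as an immediate consequence of the preceding Lemma combined with the equivalence of unitarizability and $\chi$-stability for indecomposable systems. You merely make explicit the induction on $m$ and the (easy) fact that adjoining subspaces preserves indecomposability, both of which the paper leaves implicit.
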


\begin{rem} Let us note that when a preliminary version of
this article was ready the authors were informed that the same
result was independently obtained in \cite{Ya2010}.
\end{rem}

It remains to prove that the representation of $\mathcal P^*_5$ from
the Appendix A is unitarized. Since it is dual to representation of
$\mathcal P_5$, this follows from the following lemma.

\begin{lem}
    Let $\pi=(V;V_i)$ be unitarizable with the weight $\chi$  and let
    $\pi^\prime=(V;V^\prime_i)$ be indecomposable dual system (each $V^\prime_i$
    is a complement to $V_i$)  assume also that the dimension vector
    of $\pi$ is a real root, i.e. $Q_\mathcal P(d_\pi)=1$. Then  $\pi^\prime$
    is also
    unitarizable with the weight $\chi$.
\end{lem}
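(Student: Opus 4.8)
The plan is to build, out of a unitarizing hermitian structure for $\pi$, one distinguished dual system that is visibly unitarized with the same weight $\chi$, and then to use the rigidity of indecomposable representations over a finite type poset to show that any indecomposable dual system must be isomorphic to it.

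\textbf{A distinguished dual system.} Fix a hermitian structure $\langle\cdot,\cdot\rangle$ on $V$ with $\alpha_1P_{V_1}+\ldots+\alpha_nP_{V_n}=\lambda_\chi(\pi)I$. Since the form is positive definite, each $V_i^{\perp}$ is a genuine complement of $V_i$ and $V_i\subset V_j$ implies $V_j^{\perp}\subset V_i^{\perp}$, so $\pi^{\perp}=(V;V_i^{\perp})$ is a dual system of $\pi$ and a representation of the opposite poset $\mathcal P^{\mathrm{op}}$. From $P_{V_i^{\perp}}=I-P_{V_i}$ we get
$$
\alpha_1P_{V_1^{\perp}}+\ldots+\alpha_nP_{V_n^{\perp}}=\Bigl(\textstyle\sum_i\alpha_i\Bigr)I-\lambda_\chi(\pi)I ,
$$
and since $\dim V_i^{\perp}=\dim V-\dim V_i$ one checks that $\sum_i\alpha_i-\lambda_\chi(\pi)=\lambda_\chi(\pi^{\perp})$; hence the same hermitian structure is a $\chi$-balanced metric for $\pi^{\perp}$, so $\pi^{\perp}$ is unitarizable with weight $\chi$. (Equivalently, one may note that $U\mapsto U^{\perp}$ carries the $\chi$-stability inequality for $\pi$ applied to $U^{\perp}$ into the $\chi$-stability inequality for $\pi^{\perp}$ applied to $U$, so $\pi^{\perp}$ is $\chi$-stable because $\pi$ is.) Finally, $\pi^{\perp}$ is indecomposable: orthogonal complementation is an involutive contravariant equivalence $\mathrm{Rep}(\mathcal P)\to\mathrm{Rep}(\mathcal P^{\mathrm{op}})$, so $\mathrm{End}(\pi^{\perp})\cong\mathrm{End}(\pi)^{\mathrm{op}}$, which is one-dimensional because $Q_{\mathcal P}(d_\pi)=1$ forces $\pi$ to be a brick.

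\textbf{Identifying $\pi'$ with $\pi^{\perp}$.} The set $Y$ of all dual systems of $\pi$ — tuples $(V_i')_i$ with $V_i'\oplus V_i=V$ and $V_j'\subset V_i'$ whenever $V_i\subset V_j$ — is an irreducible variety: constructing it along the Hasse quiver, once $V_i'$ has been chosen the admissible $V_j'$ for $i\prec j$ form the nonempty irreducible space of complements of $V_j\cap V_i'$ inside $V_i'$. On an irreducible family of representations the dimension vector and the dimension of the endomorphism algebra are constant on a dense open set; as $\mathcal P$, and therefore $\mathcal P^{\mathrm{op}}$, is of finite type (Kleiner's list of critical posets is stable under passing to the opposite poset), the generic member $\eta$ of $Y$ satisfies $\dim\mathrm{End}(\eta)=1$, so its $GL(V)$-orbit $O(\eta)$ is dense in $Y$ and $Y\subset\overline{O(\eta)}$. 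Both $\pi^{\perp}$ and the given $\pi'$ belong to $Y$, hence to $\overline{O(\eta)}$, and both are bricks ($\pi^{\perp}$ by the previous step; $\pi'$ because it is an indecomposable representation of the finite type poset $\mathcal P^{\mathrm{op}}$). Since a proper orbit degeneration strictly increases $\dim\mathrm{End}$, neither $\pi^{\perp}$ nor $\pi'$ can be a proper degeneration of $\eta$, whence $\pi'\cong\eta\cong\pi^{\perp}$ in $\mathrm{Rep}(\mathcal P^{\mathrm{op}})$. As unitarizability with a fixed weight depends only on the isomorphism class of the underlying linear representation, $\pi'$ is unitarizable with weight $\chi$.

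The heart of the argument is the second step, and the hypothesis $Q_{\mathcal P}(d_\pi)=1$ is precisely what makes it run: it forces the distinguished dual $\pi^{\perp}$ to be a brick, so that its orbit is dense in the irreducible variety of all dual systems and no other dual system can be indecomposable without being isomorphic to it. Without the real-root assumption the relevant dimension vector could support a positive-dimensional family of pairwise non-isomorphic indecomposables, and a dual system of $\pi$ would not have to be the balanced one.
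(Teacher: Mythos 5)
Your first step is exactly the paper's: fix the unitarizing metric, note that $P_{V_i^{\perp}}=I-P_{V_i}$ turns the orthoscalarity relation for $\pi$ into the relation $\sum_i\alpha_i P_{V_i^{\perp}}=(\sum_i\alpha_i-\lambda_\chi(\pi))I$ for $\pi^{\perp}=(V;V_i^{\perp})$, so this particular dual system is unitarized with the same weight (your parenthetical stability computation is also correct). The divergence, and the gap, lies in how you identify an arbitrary indecomposable dual system $\pi'$ with $\pi^{\perp}$. The paper does this by observing that $d_{\pi'}$ is again a real root and invoking the uniqueness of the indecomposable representation with a prescribed real-root dimension vector; that is where the hypothesis $Q_{\mathcal P}(d_\pi)=1$ is actually consumed.

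You instead run a geometric argument on the variety $Y$ of all dual systems, and its linchpin --- irreducibility of $Y$ --- is asserted with a justification that fails. The step ``choose $V_j'$ as a complement of $V_j\cap V_i'$ inside $V_i'$'' is fine only when $j$ has a single lower cover $i$. When $j$ has two lower covers $i_1,i_2$, the admissible $V_j'$ must be a complement of $V_j$ lying inside $V_{i_1}'\cap V_{i_2}'$, and this set can be empty: in $V=\mathbb C^3$ take $V_{i_1}=\langle e_1\rangle$, $V_{i_2}=\langle e_2\rangle$, $V_j=\langle e_1,e_2\rangle$ and the complements $V_{i_1}'=\langle e_1+e_2,e_3\rangle$, $V_{i_2}'=\langle e_1+e_2,e_1+e_3\rangle$; their intersection is the line $\langle e_1+e_2\rangle\subset V_j$, which contains no complement of $V_j$. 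So the fibres of your construction can be empty and in general jump in dimension, and incidence varieties of poset configurations are genuinely reducible in examples (two incomparable subspaces required to contain two incomparable subspaces already produces two components). This is not a corner case here: the lemma is applied to $\mathcal P_5$, in which $a_2$ covers both $a_1$ and $b_1$ and $c_4$ covers both $c_3$ and $b_1$. Without irreducibility of $Y$ you only get a dense orbit on each irreducible component, and $\pi'$ and $\pi^{\perp}$ could a priori lie on different components, so the conclusion $\pi'\cong\pi^{\perp}$ does not follow. The rest of your second step (semicontinuity of $\dim\mathrm{End}$, strict increase under proper degeneration, bricks having orbits of codimension one) would indeed close the argument if irreducibility were established; as written, the real-root hypothesis must be used the way the paper uses it --- uniqueness of the indecomposable in its dimension vector --- to bridge this gap.
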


\begin{proof}
As $\pi$ is unitarizable with $\chi$, we have $\sum\chi_i
P_{V_i}=\lambda_\pi I$ for appropriated choice of scalar product. It
is not hard to check the dimension vector $d_\pi^\prime$ is also a
real root. Undecomposability of $\pi^\prime$ implies that it is
linearly equivalent to the system $(V;\textrm{Im}(I-P_{V_i}))$
(because there exist only one indecomposable representation with
dimension vector $d_\pi^\prime$). The latter system is obviously
unitarized with the weight $\chi$ due to
$\sum\chi_i(I-P_{V_i})=(\sum\chi_i-\lambda_\pi) I$.
\end{proof}

Now Theorem 2 is completely proved.


\medskip


\section*{Appendix A. Quite sincere representation and weights appropriated for unitarization.}

\renewcommand*\arraystretch{1.5}

In this appendix you can find a complete description of all quite
sincere representations of finite posets and the weights
appropriated for the unitarization. To simplify the notation we will
denote by $V_{i_1,\ldots,i_j}$ vector space spanned by vectors
$e_{i_1},\ldots,e_{i_j}$.

\begin{longtable}{c|p{7.2cm}|p{3.1cm}}
\textbf{Poset} & \textbf{Representation} $\pi=(V;\pi(a_i);\pi(b_i);\pi(c_i))$ & \textbf{Weight $\chi$}\\
\hline
 $(1,1,1)$ & $(\mathbb{C}^2;V_1;V_2;  V_{12})$ & $(1,1,1)$ \\
 \hline
 $(1,2,2)$ & 1) $(\mathbb{C}^3;V_{123};V_1, V_{1,2};V_{3}, V_{2,3})$ & $(1,1,1,1,1)$ \\
 & 2) $(\mathbb{C}^3; V_{12,13};V_{1}, V_{1,2};V_{3}, V_{2,3})$ & $(2,1,1,1,1)$ \\
 \hline
 $(1,2,3)$ & 1) $(\mathbb{C}^4;V_{123,24};V_4, V_{1,4};V_3, V_{2,3}, V_{1,2,3})$ & $(2,1,1,1,1,1)$ \\
 & 2) $(\mathbb{C}^4; V_{124,13};V_4, V_{1,2,4};V_{3}, V_{2,3}, V_{1,2,3})$ & $(2,1,2,1,1,1)$ \\
 & 3) $(\mathbb{C}^4; V_{123,24}; V_{1,4}, V_{1,2,4}; V_{3}, V_{2,3}, V_{1,2,3})$ & $(2,2,1,1,1,1)$ \\
\hline
 $(1,2,4)$ &
   1) $(\mathbb{C}^5;V_{134,235};V_{5}, V_{1,2,5};$\\ & \qquad $V_4, V_{3,4}, V_{2,3,4}, V_{1,2,3,4})$&  $(2,1,2,1,1,1,1)$ \\
 & 2) $(\mathbb{C}^5; V_{123,245};V_{1,5}, V_{1,2,5};$\\ & \qquad $ V_4, V_{3,4}, V_{2,3,4}, V_{1,2,3,4})$ &  $(2,2,1,1,1,1,1)$ \\
 & 3) $(\mathbb{C}^5; V_{124,235};V_{1,5}, V_{1,2,3,5};$\\ & \qquad $V_4, V_{3,4}, V_{2,3,4}, V_{1,2,3,4})$ & $(2,2,2,1,1,1,1)$  \\
 & 4) $(\mathbb{C}^5; V_{124,23,15}; V_5, V_{1,2,5};$\\ & \qquad $V_4, V_{3,4}, V_{2,3,4}, V_{1,2,3,4})$ &  $(3,1,2,1,1,1,1)$ \\
 & 5) $(\mathbb{C}^5; V_{13,234,45};V_{1,5}, V_{1,2,5};$\\ & \qquad $ V_4, V_{3,4}, V_{2,3,4}, V_{1,2,3,4})$ & $(3,2,1,1,1,1,1)$ \\
 & 6) $(\mathbb{C}^5; V_{12,234,45}; V_{1,5}, V_{1,2,3,5};$\\ & \qquad $V_4, V_{3,4}, V_{2,3,4}, V_{1,2,3,4})$ & $(3,2,2,1,1,1,1)$  \\
 & 7) $(\mathbb{C}^6; V_{123,245,16}; V_{5,6}, V_{1,2,5,6};$\\ & \qquad $ V_4, V_{3,4}, V_{2,3,4}, V_{1,2,3,4})$ & $(3,2,2,1,1,1,1)$ \\
 & 8) $(\mathbb{C}^6; V_{125,234,46}; V_{1,6}, V_{1,2,3,6};$\\ & \qquad $ V_5, V_{4,5}, V_{3,4,5}, V_{1,2,3,4,5})$ & $(3,2,2,1,1,1,2)$ \\
 & 9) $(\mathbb{C}^6; V_{125,134,46}; V_{1,6}, V_{1,2,3,6};$\\ & \qquad $ V_{5}, V_{4,5}, V_{2,3,4,5}, V_{1,2,3,4,5})$ & $(3,2,2,1,1,2,1)$ \\
 & 10) $(\mathbb{C}^6; V_{125,234,46}; V_{1,6}, V_{1,2,3,6};$\\ & \qquad $ V_5, V_{3,4,5}, V_{2,3,4,5}, V_{1,2,3,4,5})$ & $(3,2,2,1,2,1,1)$\\
 & 11) $(\mathbb{C}^6; V_{135,124,46};V_{1,6},V_{1,2,3,6};$\\ & \qquad $V_{4,5}, V_{3,4,5}, V_{2,3,4,5},  V_{1,2,3,4,5})$ & $(3,2,2,2,1,1,1)$  \\
\hline
$\mathcal P_1$ &  $(\mathbb{C}^3;V_{123}, V_{23,1};V_1, V_{1,2};V_{3},V_{2,3})$ & $(1,0.1,1,1,1,1)$ \\
 \hline
$\mathcal P_2$ & 1) $(\mathbb{C}^4;V_{14}, V_{1,2,4}; V_4, V_{4,123};V_3, V_{2,3}, V_{1,2,3})^*$ & $(1,1,1,1,1,1,1)$\\
 & 2) $(\mathbb{C}^4;V_{14}, V_{1,2,4}; V_4, V_{4,12,23}; V_3, V_{2,3}, V_{1,2,3})^*$ & $(1,1,1,2,1,1,1)$ \\
 & 3) $(\mathbb{C}^5; V_{1,25}, V_{1,2,3,5}; V_5, V_{123,24,5};$\\ & \qquad $ V_{3,4}, V_{2,3,4},V_{1,2,3,4})^*$ & $(2,1,1,2,2,1,1)$\\
 & 4) $(\mathbb{C}^5; V_{1,25}, V_{1,2,3,5}; V_5, V_{13,234,5};$\\ & \qquad $ V_4, V_{2,3,4}, V_{1,2,3,4})^*$ & $(2,1,1,2,1,2,1)$ \\
 & 5) $(\mathbb{C}^5; V_{1,25}, V_{1,2,3,5}; V_5, V_{123,24,5};$\\ & \qquad $ V_4, V_{3,4},  V_{1,2,3,4})^*$ & $(2,1,1,2,1,1,2)$ \\
 & 6) $(\mathbb{C}^5; V_{15,4}, V_{1,2,4,5}; V_5, V_{123,24,5};$\\ & \qquad $ V_3, V_{2,3},  V_{1,2,3})^*$ & $(2,1,1,2,1,1,1)$\\
 & 7) $(\mathbb{C}^4; V_{123,24}, V_{13,2,4}; V_4, V_{1,4};$\\ &\qquad $V_3, V_{2,3}, V_{1,2,3})$ & $(2,0.1,1,1,1,1,1)$ \\
 & 8) $(\mathbb{C}^4; V_{124,13}, V_{12,13,4}; V_4, V_{1,2,4};$\\ &\qquad $V_3, V_{2,3}, V_{1,2,3})$ & $(2,0.1,1,2,1,1,1)$ \\
\hline
$\mathcal{P}_3$ & $(\mathbb{C}^4;V_{123,24};V_4, V_{1,4}, V_{1,3,4}; V_3, V_{2,3}, V_{1,2,3})$ & $(2,1,1,0.1,1,1,1)$ \\
\hline
$\mathcal P_4$ & $(\mathbb{C}^4;V_{14}, V_{1,2,4}; V_4, V_{4,123}, V_{1,23,4};$\\ &\qquad $ V_3, V_{2,3}, V_{1,2,3})$  & $(1,1,1,1,0.1,1,1,1)$\\
\hline
$\mathcal P_5$ & $(\mathbb{C}^5; V_{15,4}, V_{1,2,4,5}; V_5, V_{123,24,5};$\\ & \qquad $ V_3, V_{2,3},  V_{1,2,3}, V_{1,2,3,5})$  & $(2,1,1,2,1,1,1,0.1)$ \\
\hline
$\mathcal P_5^*$ & $(\mathbb{C}^5;V_{5}, V_{1,2,5}; V_{134,235}, V_{13,23,4,5};$\\ & \qquad $ V_4, V_{3,4}, V_{2,3,4}, V_{1,2,3,4})$ & $(1,2,2,0.1,1,1,1,1)$  \\
\hline
\end{longtable}

\begin{rem}
    An interesting phenomena that each quite sincere representation can be unitarized with the weight that equal to
    the dimension vector (for primitive case) or with the weight that an arbitrary closed to the dimension vector (for
non-primitive poset) and this weight in some sense is the most
stable weight.
\end{rem}

\section*{Appendix B. The sets $\triangle_{\pi}^{\mathcal P}$ and several examples.}

It is routine to describe the set $\triangle_{\pi}^{\mathcal P}$ for
an arbitrary linear representation $\pi$ of $\mathcal P$. Instead we
will give an algorithm of its description.

\begin{prop} $\triangle_{\pi}^{\mathcal P}$ is convex.
\end{prop}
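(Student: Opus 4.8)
The plan is to show that the set $\triangle_\pi^{\mathcal P}$ of weights appropriate for unitarization is a convex subset of $\mathbb R^n_+$. By Theorem~\ref{mainthmUnit}'s underlying criterion (the theorem stating that for an indecomposable system $\pi$, unitarizability with weight $\chi$ is equivalent to $\chi$-stability of $\pi$), it suffices to prove that the set of weights $\chi$ for which $\pi$ is $\chi$-stable is convex. So I would first replace ``$\pi$ can be unitarized with $\chi$'' by ``$\pi$ is $\chi$-stable'' throughout, using that equivalence.

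Next I would unwind the stability condition. For a fixed proper subspace $U\subset V$, the defining inequality $\lambda_\chi(\pi\cap U)<\lambda_\chi(\pi)$ can be rewritten, after clearing the positive denominators $\dim U$ and $\dim V$, as
$$
    \dim V \sum_{i=1}^n \chi_i \dim(V_i\cap U) < \dim U \sum_{i=1}^n \chi_i \dim V_i,
$$
which is a strict \emph{linear} (in fact homogeneous linear) inequality in the variables $\chi=(\chi_1,\ldots,\chi_n)$. The half-space it cuts out, intersected with the open positive cone $\mathbb R^n_+$, is convex. The set $\triangle_\pi^{\mathcal P}$ is then the intersection over all proper subspaces $U\subset V$ of these open half-spaces together with $\mathbb R^n_+$, hence an intersection of convex sets, hence convex. (As the excerpt already notes, one need only intersect over the finitely many maximal subdimension vectors, so this is even a finite intersection, but convexity does not require finiteness.)

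I would therefore structure the proof as: (1) invoke the stability criterion to reduce to convexity of $\{\chi\in\mathbb R^n_+ : \pi \text{ is } \chi\text{-stable}\}$; (2) observe that for each proper $U\subset V$ the condition is the strict linear inequality displayed above, defining an open half-space; (3) conclude $\triangle_\pi^{\mathcal P}$ is the intersection of $\mathbb R^n_+$ with a family of open half-spaces, hence convex. I do not anticipate a genuine obstacle here — the only point deserving a word of care is that the subdimension data $\dim(V_i\cap U)$ and $\dim V_i$ are \emph{constants} independent of $\chi$, so that the inequality really is linear in $\chi$; this is immediate from the definitions since $U$ and the $V_i$ are fixed subspaces of the fixed vector space $V$. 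One might also remark that the same argument shows $\triangle_\pi^{\mathcal P}$ is (relatively) open and a cone, but only convexity is asserted.
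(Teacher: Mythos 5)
Your proof is correct, but it is not the argument the paper gives. The paper's own proof merely observes that if $\chi \in \triangle_{\pi}^{\mathcal P}$ then $(1-t)\chi \in \triangle_{\pi}^{\mathcal P}$ for each $t\in[0,1]$, i.e.\ that the set is closed under scaling towards the origin; strictly speaking this shows only that $\triangle_{\pi}^{\mathcal P}$ is a union of rays through the origin (star-shaped), which does not by itself yield convexity. Your route --- rewriting the stability condition for each fixed proper subspace $U$ as the strict homogeneous linear inequality $\dim V\sum_{i}\chi_i\dim(V_i\cap U) < \dim U\sum_{i}\chi_i\dim V_i$ and presenting $\triangle_{\pi}^{\mathcal P}$ as the intersection of $\mathbb R^n_+$ with the resulting open half-spaces --- is precisely the description the paper itself adopts afterwards in Appendix~B (the condition $A_\pi\chi<0$), and it genuinely delivers convexity, indeed the stronger conclusion that the set is an open polyhedral cone, which you correctly note in passing. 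The one caveat worth flagging is that the equivalence between unitarizability with weight $\chi$ and $\chi$-stability is stated in the paper only for indecomposable systems, so your step (1) tacitly assumes $\pi$ indecomposable; the paper makes the same tacit assumption, so this is not a defect relative to the source, but a sentence acknowledging it would make the reduction airtight.
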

\begin{proof}
One can see that if $\chi \in \triangle_{\pi}^{\mathcal P}$ then
$(1-t)\chi \in \triangle_{\pi}^{\mathcal P}$ for each $t\in [0,1]$,
because if $\pi$ is stable with $\chi$ then $\pi$ is also stable
with $(1-t)\chi$. Hence $\triangle_{\pi}^{\mathcal P}$ is convex and
connected.
\end{proof}

Stability conditions for the system of subspaces
$\pi=(V;V_1,\ldots,V_n)$ define some $(m\times n)$--matrix $A_\pi\in
M_{m,n}(\mathbb R)$. Namely this matrix is defined in the following
way. For any vector of the form $d=(d_0;d_1,\ldots,d_n)\in \mathbb
Z^{n+1}$ with $d_0>0$ by $n(d)$ we denote normalized vector
$n(d)=\left(\frac{d_1}{d_0},\ldots,\frac{d_n}{d_0}\right)$. Let
$\dim(\pi)=(\dim \pi_0;\dim \pi_i)$ be dimension vector of $\pi$ and
$Sub(\pi)=\{d_{\pi,i}\ |\ i\in \{1,\ldots,m\} \}$ be the set of
maximal subdimension vectors for $\pi$. Then the matrix $A_\pi$ has
the following form
$$
       A_\pi= \left(
          \begin{array}{c}
            n(d_{\pi,1})-n(\dim(\pi))  \\
            \vdots \\
            n(d_{\pi,m})-n(\dim(\pi)) \\
          \end{array}
        \right) \oplus -I_n,
$$
here by $\oplus$ be understand  The set $\triangle_{\pi}^{\mathcal
P}$ thus can be defined as the set of those
$\chi=(\alpha_1,\ldots,\alpha_n)$ that $A_\pi\chi<0$. Using the
standard methods concerning to systems of linear inequalities (see
for example \cite{PadbergManfred}) these sets can be described in
terms of extreme points and extremal rays.

\begin{defn}
Let $P \subset \mathbb R^n$ be some subset. A point $x \in P$ is
called an extreme point of P if for all $x_1,x_2 \in P$ and every
$0<\mu<1$ such that $x=\mu x_1 +(1-\mu) x_2$, we have $x=x_1=x_2$.
\end{defn}

The set of extreme points can be determined by the set of those
points $x_0 \in \mathbb R^n$ that $A_\pi^\prime x_0=0$ for some
$(n\times n)$ full rank submatrix $A_\pi^\prime$ (see
\cite{PadbergManfred}), hence this set contains the only element
$x_0=0$.

\begin{defn}
    \begin{enumerate}
        \item A set $C \subset \mathbb R^n$ is a cone if for every
        pair of points $x_1,x_2 \in C$ we have
        $\lambda_1x_1+\lambda_2x_2 \in C$ for all
        $\lambda_1,\lambda_2\geq0$.
        \item A half-line $y=\{\lambda x \ | \ \lambda \geq 0, \ x
        \in \mathbb R^n\}$ is an extremal ray of $C$ if $y \in C$
        and $-y \notin C$ and if for all $y_1,y_2 \in C$ and $0<\mu<1$
        with $y=(1-\mu)y_1+\mu y_2$ we have $y=y_1=y_2$.
    \end{enumerate}
\end{defn}

Obviously $\triangle_{\pi}^{\mathcal P}$ is a cone. The following
proposition describes how to determine all extermal rays of
$\triangle_{\pi}^{\mathcal P}$.

\begin{prop} (see \cite{PadbergManfred})
    $x \in \overline{\triangle_{\pi}^{\mathcal P}}$ is an extremal ray if and
    only if  there exist $rank(A_\pi)-1$ linear independent row vectors
    $a_1,\ldots,a_{rank(A_\pi)-1}$ of $A_\pi$ such that
    $$
        \left(
          \begin{array}{c}
            a_1 \\
            \vdots \\
            a_{rank(A_\pi)-1} \\
          \end{array}
        \right) \cdot x =\vec{0}.
    $$
\end{prop}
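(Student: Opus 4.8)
The plan is to identify $\overline{\triangle_\pi^{\mathcal P}}$ with a pointed polyhedral cone and then read off its one-dimensional faces from the standard face description of such a cone; the whole assertion is the polyhedral fact recorded in \cite{PadbergManfred}. If $\triangle_\pi^{\mathcal P}=\emptyset$ there is nothing to prove, so assume it is non-empty. By the discussion preceding the proposition, $\triangle_\pi^{\mathcal P}=\{x\in\mathbb R^n:A_\pi x<0\}$, whose rows $a_1,\dots,a_m$ are the vectors $n(d_{\pi,i})-n(\dim(\pi))$ together with the rows of the block $-I_n$. Using the classical fact that the closure of the solution set of a consistent system of strict linear inequalities equals the solution set of the corresponding non-strict system, we get $\overline{\triangle_\pi^{\mathcal P}}=\{x:A_\pi x\le 0\}=:C$. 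Since the rows of $-I_n$ occur among the rows of $A_\pi$ we have $\mathrm{rank}(A_\pi)=n$, and $C$ is pointed: $A_\pi x\le 0$ and $A_\pi(-x)\le 0$ force $x_j\le 0$ and $-x_j\le 0$ for all $j$, hence $x=0$. In particular $\mathrm{rank}(A_\pi)-1=n-1$.

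Next I would invoke the face structure of $C$. For $J\subseteq\{1,\dots,m\}$ write $A_J$ for the submatrix of $A_\pi$ with rows $(a_j)_{j\in J}$ and $F_J=\{x\in C:a_j\cdot x=0,\ j\in J\}$. Every non-empty face of $C$ is of the form $F_J$ (it is the intersection of $C$ with the supporting hyperplane $\{(\sum_{j\in J}a_j)\cdot x=0\}$, using $a_j\cdot x\le 0$ on $C$), and conversely each $F_J$ is a face. For a face $F$ let $J(F)$ be the set of indices $j$ with $a_j\cdot x=0$ for every $x\in F$ (the implicit equalities); then $F$ spans (and has non-empty relative interior in) the linear subspace $\ker A_{J(F)}$, so $\dim F=n-\mathrm{rank}(A_{J(F)})$.

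Finally I would translate the Definition of an extremal ray quoted in the text into the statement ``$1$-dimensional face'': for the pointed cone $C$, a half-line $y=\{\lambda x:\lambda\ge 0\}$ with $x\neq 0$ satisfies $y\subseteq C$, $-y\not\subseteq C$ and is not a proper convex combination of two half-lines of $C$ precisely when $y$ is a $1$-dimensional face of $C$. Hence $x\neq 0$ spans an extremal ray iff $x\in C$ and the smallest face $F$ of $C$ containing $x$ has dimension $1$; since $J(F)=\{j:a_j\cdot x=0\}$ for such a point $x$ (which lies in the relative interior of $F$), the dimension formula turns this into $\mathrm{rank}\{a_j:a_j\cdot x=0\}=n-1$, i.e. into the existence of $n-1=\mathrm{rank}(A_\pi)-1$ linearly independent rows of $A_\pi$ annihilating $x$. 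For the converse: given $x\neq 0$ in $C$ and linearly independent rows $a_{i_1},\dots,a_{i_{n-1}}$ annihilating it, set $J=\{i_1,\dots,i_{n-1}\}$; then $F_J$ is a face of $C$ contained in the line $\ker A_J$, and the intersection of a line through the origin with the pointed cone $C$ is a half-line as soon as it contains a non-zero point, so $F_J=\{\lambda x:\lambda\ge 0\}$ is an extremal ray.

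I expect the only real work to be this bookkeeping: carefully separating the constraints active at a single point $x$ from the constraints active on an entire face (the implicit equalities), and verifying the equivalence between the combinatorial Definition of ``extremal ray'' in the text and the notion of a one-dimensional face of a pointed polyhedral cone. Once that dictionary is in place, the proposition is immediate from the standard results on polyhedra in \cite{PadbergManfred}.
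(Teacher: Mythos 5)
Your proof is correct, but note that the paper does not actually prove this proposition at all: it is quoted verbatim from the reference \cite{PadbergManfred} as a standard fact about polyhedral cones, so there is no internal argument to compare against. What you have supplied is a complete, self-contained derivation of that textbook fact in the specific setting at hand, and all the key steps are sound: the identification $\overline{\{A_\pi x<0\}}=\{A_\pi x\le 0\}$ (valid because the strict system is consistent, which you correctly flag as a hypothesis), the observation that the $-I_n$ block forces $\mathrm{rank}(A_\pi)=n$ and makes the cone pointed, the face description $F_J$ via the supporting hyperplane $\{(\sum_{j\in J}a_j)\cdot x=0\}$, the dimension formula $\dim F=n-\mathrm{rank}(A_{J(F)})$ in terms of implicit equalities, and the dictionary between the paper's combinatorial definition of extremal ray and one-dimensional faces. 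The only caveat worth recording is that the proposition as stated is slightly imprecise at $x=\vec 0$ (where the rank condition holds vacuously but no ray is spanned); you handle this correctly by restricting to $x\neq 0$ in the converse direction, which is the standard convention implicit in the source. In short, your argument proves more than the paper does, at the cost of reproducing material the authors chose to outsource.
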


\begin{prop} (see \cite{PadbergManfred})
The sets $\triangle_{\pi}^{\mathcal P}$ are described in the
following way
$$
    \triangle_{\pi}^{\mathcal P}=cone(A),
$$
where $A$ is the set of extremal rays, $cone(X)$ is an open cone of
$X$ defined by
$$cone(x_1,\ldots,x_n)=\left\{\sum_{i=1}^n \mu_i x_i\ |\ \mu_i>0\right\}.$$
\end{prop}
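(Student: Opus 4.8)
The plan is to realize $\triangle_{\pi}^{\mathcal P}$ as the interior of a pointed polyhedral cone and then to conclude by the Minkowski--Weyl decomposition theorem combined with the elementary description of the interior of a finitely generated cone; all the facts used here are collected in \cite{PadbergManfred}. First I would unwind the definition: $\triangle_{\pi}^{\mathcal P}=\{\chi\in\mathbb R^n:A_\pi\chi<0\}$, and since the lower block of $A_\pi$ is $-I_n$, any such $\chi$ satisfies $\chi\in\mathbb R^n_+$, so the set lies in the open positive orthant; being the solution set of finitely many strict linear inequalities it is, when nonempty, an open convex (by the preceding Proposition) cone in $\mathbb R^n$. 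If it is empty the asserted identity holds trivially with $A=\emptyset$; otherwise --- the only case of real interest, since by Theorem~\ref{mainthmUnit} every indecomposable representation of a poset of finite type is unitarizable --- I would proceed as follows.

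I would pass to the closure $\overline{\triangle_{\pi}^{\mathcal P}}=\{\chi:A_\pi\chi\le 0\}$, a closed convex polyhedral cone contained in $\overline{\mathbb R^n_+}$ and therefore \emph{pointed} (it contains no line). By the Minkowski--Weyl theorem a pointed polyhedral cone is the conical hull of its finitely many extremal rays, which by the previous Proposition are exactly the elements of $A$; choosing a representative vector $x$ for each ray this gives $\overline{\triangle_{\pi}^{\mathcal P}}=\{\sum_{x\in A}\mu_x x:\mu_x\ge 0\}$. The proof is then finished by taking interiors of both sides. On the left, $\triangle_{\pi}^{\mathcal P}$ is open and nonempty, hence full-dimensional, and for a convex set with nonempty interior the interior of the closure equals the interior, so $\mathrm{int}\,\overline{\triangle_{\pi}^{\mathcal P}}=\triangle_{\pi}^{\mathcal P}$. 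On the right, the interior of the finitely generated pointed full-dimensional cone coincides with the set of its strictly positive combinations $\{\sum_{x\in A}\mu_x x:\mu_x>0\}=cone(A)$: here ``$\supseteq$'' uses that $(\mu_x)_{x\in A}\mapsto\sum_x\mu_x x$ is linear and onto $\mathbb R^n$, hence open, so it carries a neighbourhood of a strictly positive tuple onto a neighbourhood of its image; and ``$\subseteq$'' uses that shifting an interior point $z$ by $-\epsilon\sum_{x\in A}x$ for small $\epsilon>0$ stays in the cone, after which adding $\epsilon$ to every coefficient of a nonnegative representation of $z-\epsilon\sum_{x}x$ makes all coefficients strictly positive. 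Comparing the two computations yields $\triangle_{\pi}^{\mathcal P}=cone(A)$.

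The main difficulty here is bookkeeping rather than substance: one must invoke the polyhedral-cone dictionary of \cite{PadbergManfred} specifically in the \emph{pointed} setting, so that the Minkowski--Weyl description genuinely is in terms of extremal rays with no lineality space to account for, and one must keep straight the two distinct ``interior $=$ strictly positive hull'' facts --- one attached to the half-space description $A_\pi\chi\le 0$, the other to the generator description --- whose coincidence is precisely the content of the Proposition.
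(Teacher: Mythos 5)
Your argument is correct: the paper gives no proof of this Proposition at all, merely citing \cite{PadbergManfred}, and what you supply is precisely the standard polyhedral-cone argument that the citation stands for --- pass to the closure $\{\chi : A_\pi\chi\le 0\}$, note it is pointed because the $-I_n$ block confines it to the nonnegative orthant, apply Minkowski--Weyl to write it as the nonnegative hull of its finitely many extremal rays (which the preceding Proposition identifies), and then recover $\triangle_{\pi}^{\mathcal P}$ by the two interior computations ($\mathrm{int}\,\overline{C}=C$ for a nonempty open convex $C$, and the interior of a full-dimensional finitely generated cone equals its strictly positive hull). The only point worth tightening is the degenerate case: with the stated definition $cone(x_1,\ldots,x_n)=\left\{\sum_i \mu_i x_i\ \middle|\ \mu_i>0\right\}$ the empty-$A$ convention deserves a word, but since the Proposition is only invoked for stable (hence nonempty $\triangle_{\pi}^{\mathcal P}$) situations this is cosmetic.
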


We consider two examples:

\emph{1)}. Let $\mathcal P=(1,1,1)$, and $\pi=(\mathbb
C^2;V_1,V_2,V_{12})$. Matrix $A_{\pi}$ is given by
$$
       A_\pi= \left(
          \begin{array}{ccc}
            \frac{1}{2} & -\frac{1}{2} & -\frac{1}{2} \\
            -\frac{1}{2} & \frac{1}{2} & -\frac{1}{2} \\
            -\frac{1}{2} & -\frac{1}{2} & \frac{1}{2} \\
          \end{array}
        \right)\oplus -I_3.
$$
The set $\triangle^{(1,1,1)}_{\pi}$ has the only extreme point
$(0,0,0)$ and three extremal rays $(1,1,0),\ (1,0,1),\ (0,1,1)$.
Hence the whole set is given by
$$\triangle^{(1,1,1)}_{(\mathbb
C^2;V_1,V_2,V_{12})}=\{(\alpha_1+\alpha_2,\alpha_1+\alpha_3,\alpha_2+\alpha_3)
\ \mid \ \alpha_i \in \mathbb R_+\};$$

\emph{2)}.  Let us take the poset $\mathcal
P=\mathcal(N,2)=\{a_1,a_2,b_1,b_2,c_1,c_2\}$, $a_1\prec a_2$,
$b_1\prec b_2$, $b_1\prec a_2$, $c_1\prec c_2$. It has the only
quite sincere representation $\pi$:

\begin{center}
\begin{picture}(150,50)
      \put(4,5){$\langle e_{123}\rangle$}
      \put(55,5){$\langle e_1 \rangle$}
      \put(98,5){$\langle e_3 \rangle$}
      \put(15,16){\vector(0,1){20}}
      \put(63,16){\vector(0,1){20}}
      \put(106,16){\vector(0,1){20}}
      \put(0,40){$\langle e_1,e_{123}\rangle$}
      \put(48,40){$\langle e_1, e_2\rangle$}
      \put(91,40){$\langle e_2, e_3 \rangle$}
      \put(59,16){\vector(-2,1){35}}
\end{picture}
\end{center}

Let $\chi=(\alpha_1,\alpha_2,\beta_1,\beta_2,\gamma_1,\gamma_2)$.
The normalized dimension vector of $\pi$ is equal to
$\left(\frac{1}{3},\frac{2}{3},\frac{1}{3},\frac{2}{3},\frac{1}{3},\frac{2}{3}\right)$.
The set of maximal subdimension vectors is
\begin{align*}
Sub(\pi)=\{&(1;0,0;0,0;1,1),\ (1;0,0;0,1;0,1),\ (1;0,1;0,0;0,1),\
(1;0,1;1,1;0,0),\\ &(1;1,1;0,0;0,0),\ (2;0,1;0,1;1,2),\
(2;0,1;1,1;1,1),\ (2;0,1;1,2;0,1),\\ &(2;1,1;0,1;1,1),\
(2;1,2;1,1;0,1)\}.
\end{align*}
The corresponding matrix $A_\pi$ has the following form
$$
       A_\pi= \left(
          \begin{array}{cccccc}
              -\frac{1}{3} & -\frac{2}{3}
              &-\frac{1}{3}&-\frac{2}{3}&\frac{2}{3}&\frac{1}{3}\\
              -\frac{1}{3} & -\frac{2}{3}
              &-\frac{1}{3}& \frac{1}{3}&-\frac{1}{3}&\frac{1}{3}\\
              -\frac{1}{3} & \frac{1}{3}
              &-\frac{1}{3}&-\frac{2}{3}&-\frac{1}{3}&\frac{1}{3}\\
              -\frac{1}{3} & \frac{1}{3}
              &\frac{2}{3}& \frac{1}{3}&-\frac{1}{3}&-\frac{2}{3}\\
              \frac{2}{3} & \frac{1}{3}
              &-\frac{1}{3}&-\frac{2}{3}&-\frac{1}{3}&-\frac{2}{3}\\
              -\frac{1}{3} & -\frac{1}{6}
              &-\frac{1}{3} & -\frac{1}{6}&\frac{1}{6}&\frac{1}{3}\\
              -\frac{1}{3} & -\frac{1}{6}
              &\frac{1}{6} & -\frac{1}{6}&\frac{1}{6}&-\frac{1}{6}\\
              -\frac{1}{3} & -\frac{1}{6}
              &\frac{1}{6} & \frac{1}{3}&-\frac{1}{3}&-\frac{1}{6}\\
              \frac{1}{6} & -\frac{1}{6}
              &-\frac{1}{3} & -\frac{1}{6}&\frac{1}{6}&-\frac{1}{6}\\
              \frac{1}{6} & \frac{1}{3}
              &\frac{1}{6} & -\frac{1}{6}&-\frac{1}{3}&-\frac{1}{6}\\
          \end{array}
          \right )\oplus -I_6.
$$
The set $\triangle^{(N,2)}_{\pi}$ has nine extremal rays
\begin{gather*}
(0,0,0,1,1,1),\ (0,0,1,0,0,1),\ (0,1,0,0,1,0),\ (0,1,0,1,0,1),\\
(0,2,1,1,3,1),\ (1,0,0,0,0,1),\ (2,0,0,1,1,2),\ (1,0,0,1,1,1),\\
(1,0,1,0,1,0).
\end{gather*} Hence the whole set is given by
\begin{align*}
\triangle^{(N,2)}_{\pi}=\{(&\alpha_6+2\alpha_7+\alpha_8+\alpha_9,\alpha_3+\alpha_4+2\alpha_5,\alpha_2+\alpha_5+\alpha_9,\\
                           &\alpha_1+\alpha_4+\alpha_5+\alpha_7+\alpha_8,\alpha_1+\alpha_3+3\alpha_5+\alpha_7+\alpha_8+\alpha_9,\\
                           &\alpha_2+\alpha_4+\alpha_5+\alpha_6+2\alpha_7+\alpha_8) \ \mid \ \alpha_i \in \mathbb
                           R_+\}.
\end{align*}


\begin{rem}
 One can see that $(1,1,1,1,1,1)  \notin \triangle_{\pi}^{(N,2)}$ (this vector is semistable but is not stable) which
 means that given linear representation of the poset $(N,2)$
 can not be obtained as the spectral filtration of three partial
 reflections $A_i=A_i^*=A_i^3$ sum of which is zero $A_1+A_2+A_3=0$.
\end{rem}

\section*{Appendix C. Missing details in the proof of Theorem 1.}

\begin{lem}
The following lists contains all possible subdimension for extended
representations used in the proof of Theorem 1.

\begin{longtable}{c|p{4cm}|p{6cm}}
\bf{Poset} & \bf{Subdimensions}

$d=(\dim U;\dim(V_i\cup U))$ & \bf{Subspace $U$}\\
\hline $(1,1,1,1)$ & $(1;1;0;0;0)$ & $\langle e_1 \rangle$ \\
                   & $(1;0;1;0;0)$ & $\langle e_2 \rangle$ \\
                   & $(1;0;0;1;0)$ & $\langle e_1+e_2 \rangle$ \\
                   & $(1;0;0;0;1)$ & $\langle e_1+\lambda e_2 \rangle$ \\
\hline $(2,2,2)$ & $(1;0,0;0,0;1,1)$   &  $\langle e_3\rangle$ \\

& $(1;0,0;0,1;0,1)$   &  $\langle e_2\rangle$ \\

& $(1;0,1;0,0;0,1)$   &  $\langle e_2+(\lambda-1) e_3\rangle$ \\

& $(1;0,0;1,1;0,0)$   &  $\langle e_1\rangle$ \\

& $(1;0,1;0,1;0,0)$   &  $\langle (\lambda-1)e_1+\lambda e_2\rangle$ \\

& $(1;1,1;0,0;0,0)$   &  $\langle e_1+e_2+e_3\rangle$ \\

& $(2;0,1;0,1;1,2)$   &  $\langle e_3,e_2\rangle$ \\

& $(2;0,1;1,1;1,1)$   &  $\langle e_3,e_1\rangle$ \\

& $(2;0,1;1,2;0,1)$   &  $\langle e_2,e_1\rangle$ \\

& $(2;1,1;0,1;1,1)$   &  $\langle e_3,e_1+e_2+e_3\rangle$ \\

& $(2;1,1;1,1;0,1)$   &  $\langle e_1,e_1+e_2+e_3\rangle$ \\

& $(2;1,2;0,1;0,1)$   &  $\langle e_1+\lambda e_3,e_1+e_2+e_3\rangle$ \\

\hline $(1;3;3)$

& $(1;0;0,0,0;1,1,1)$   &  $\langle e_3\rangle$ \\

& $(1;0;0,0,1;0,1,1)$   &  $\langle e_2+\lambda e_3\rangle$ \\

& $(1;0;0,1,1;0,0,1)$   &  $\langle e_1\rangle$ \\

& $(1;0;1,1,1;0,0,0)$   &  $\langle e_4\rangle$ \\

& $(1;1;0,0,0;0,0,1)$   &  $\langle e_1+e_2+e_3\rangle$ \\

& $(1;1;0,0,1;0,0,0)$   &  $\langle \lambda e_1+e_2+ \lambda e_3+(1-\lambda) e_4\rangle$ \\

& $(2;0;0,0,1;1,2,2)$   &  $\langle e_3,e_2\rangle$ \\

& $(2;0;0,1,1;1,1,2)$   &  $\langle e_3,e_1\rangle$ \\

& $(2;0;0,1,2;0,1,2)$   &  $\langle e_2+\lambda e_3,e_1\rangle$ \\

& $(2;0;1,1,1;1,1,1)$   &  $\langle e_4,e_3\rangle$ \\

& $(2;0;1,1,2;0,1,1)$   &  $\langle e_4,e_2+\lambda e_3\rangle$ \\

& $(2;0;1,2,2;0,0,1)$   &  $\langle e_4,e_1\rangle$ \\

& $(2;1;0,0,1;1,1,2)$   &  $\langle e_3,e_1+e_2+e_3\rangle$ \\

& $(2;1;0,1,1;0,1,2)$   &  $\langle e_1,e_1+e_2+e_3\rangle$ \\

& $(2;1;1,1,1;0,1,1)$   &  $\langle e_4,e_2\rangle$ \\

& $(2;1;1,1,2;0,0,1)$   &  $\langle \lambda e_1+e_2+ \lambda e_3+(1-\lambda) e_4, e_4  \rangle$ \\

& $(2;1;0,1,2;0,1,1)$   &  $\langle \lambda e_1+e_2+ \lambda e_3+(1-\lambda) e_4, e_2+\lambda e_3 \rangle$ \\

& $(2;1;0,1,1;1,1,1)$   &  $\langle e_3,e_1+e_2-e_4 \rangle$ \\

& $(2;2;0,0,1;0,0,1)$   &  $\langle e_2+e_4,e_1+e_2+e_3\rangle$ \\

& $(3;1;0,1,2;1,2,3)$   &  $\langle e_3,e_2,e_1\rangle$ \\

& $(3;1;1,1,2;1,2,2)$   &  $\langle e_4,e_3,e_2\rangle$ \\

& $(3;1;1,2,2;1,1,2)$   &  $\langle e_4,e_3,e_1\rangle$ \\

& $(3;1;1,2,3;0,1,2)$   &  $\langle e_4,e_2+\lambda e_3,e_1\rangle$ \\

& $(3;2;0,1,2;1,1,2)$   &  $\langle e_3,e_2+e_4,e_1+e_2+e_3\rangle$ \\

& $(3;2;1,1,2;0,1,2)$   &  $\langle e_4,e_2,e_1+e_2+e_3\rangle$ \\

\hline $(1,2,5)$

& $(1;0;0,0;1,1,1,1,1)$   &  $\langle e_4\rangle$ \\

& $(1;0;0,1;0,0,1,1,1)$   &  $\langle e_2\rangle$ \\

& $(1;0;1,1;0,0,0,0,1)$   &  $\langle e_5+2e_6\rangle$ \\

& $(1;1;0,0;0,0,0,1,1)$   &  $\langle e_1+e_2+e_3\rangle$ \\

& $(1;1;0,1;0,0,0,0,0)$   &  $\langle e_1+e_6\rangle$ \\

& $(2;0;0,0;1,2,2,2,2)$   &  $\langle e_4,e_3\rangle$ \\

& $(2;0;0,1;1,1,2,2,2)$   &  $\langle e_4,e_2\rangle$ \\

& $(2;0;0,2;0,0,1,2,2)$   &  $\langle e_2,e_1\rangle$ \\

& $(2;0;1,1;1,1,1,1,2)$   &  $\langle e_5+\lambda e_6,e_4\rangle$ \\

& $(2;0;1,2;0,0,1,1,2)$   &  $\langle e_5+\lambda e_6,e_2\rangle$ \\

& $(2;0;2,2;0,0,0,0,1)$   &  $\langle e_6,e_5\rangle$ \\

& $(2;1;0,0;1,1,1,2,2)$   &  $\langle e_4,e_1+e_2+e_3\rangle$ \\

& $(2;1;0,1;0,1,1,2,2)$   &  $\langle e_3,e_1+e_2+e_3\rangle$ \\

& $(2;1;0,1;1,1,1,1,1)$   &  $\langle e_4,e_2+e_4+e_5\rangle$ \\

& $(2;1;0,2;0,0,1,1,1)$   &  $\langle e_2,e_1+e_6\rangle$ \\

& $(2;1;1,1;0,0,0,1,2)$   &  $\langle e_5+\lambda e_6,e_1+e_2+e_3\rangle$ \\

& $(2;1;1,1;0,0,1,1,1)$   &  $\langle e_5,e_2+e_4+e_5\rangle$ \\

& $(2;1;1,2;0,0,0,1,1)$   &  $\langle e_6,e_1\rangle$ \\

& $(2;2;0,1;0,0,0,1,1)$   &  $\langle e_1+e_6,e_1+e_2+e_3\rangle$ \\

& $(3;0;0,1;1,2,3,3,3)$   &  $\langle e_4,e_3,e_2\rangle$ \\

& $(3;0;0,2;1,1,2,3,3)$   &  $\langle e_4,e_2,e_1\rangle$ \\

& $(3;0;1,1;1,2,2,2,3)$   &  $\langle e_5+\lambda e_6,e_4,e_3\rangle$ \\

& $(3;0;1,2;1,1,2,2,3)$   &  $\langle e_5+\lambda e_6,e_4,e_2\rangle$ \\

& $(3;0;1,3;0,0,1,2,3)$   &  $\langle e_5+\lambda e_6,e_2,e_1\rangle$ \\

& $(3;0;2,2;1,1,1,1,2)$   &  $\langle e_6,e_5,e_4\rangle$ \\

& $(3;0;2,3;0,0,1,1,2)$   &  $\langle e_6,e_5,e_2\rangle$ \\

& $(3;1;0,1;1,2,2,3,3)$   &  $\langle e_4,e_3,e_1+e_2+e_3\rangle$ \\

& $(3;1;0,2;0,1,2,3,3)$   &  $\langle e_3,e_2,e_1\rangle$ \\

& $(3;1;1,1;1,1,1,2,3)$   &  $\langle e_5+\lambda e_6,e_4,e_1+e_2+e_3\rangle$ \\

& $(3;1;1,2;0,1,1,2,3)$   &  $\langle e_5+\lambda e_6,e_3,e_1+e_2+e_3\rangle$ \\

& $(3;1;1,2;1,1,2,2,2)$   &  $\langle e_5,e_4,e_2\rangle$ \\

& $(3;1;1,3;0,0,1,2,2)$   &  $\langle e_6,e_2,e_1\rangle$ \\

& $(3;1;2,2;0,0,1,1,2)$   &  $\langle e_6,e_5,e_2+e_4+e_5\rangle$ \\

& $(3;1;2,3;0,0,0,1,2)$   &  $\langle e_6,e_5,e_1\rangle$ \\

& $(3;2;0,1;1,1,1,2,2)$   &  $\langle e_4,e_2+e_4+e_5,e_1+e_2+e_3\rangle$ \\

& $(3;2;0,2;0,1,1,2,2)$   &  $\langle e_3,e_1+e_6,e_1+e_2+e_3\rangle$ \\

& $(3;2;0,2;1,1,1,1,2)$   &  $\langle e_4,e_2+e_4+e_5,e_1+e_6\rangle$ \\

& $(3;2;1,2;0,0,1,2,2)$   &  $\langle e_6,e_1,e_1+e_2+e_3\rangle$ \\

& $(3;3;0,1;0,0,0,1,2)$   &  $\langle e_2+e_4+e_5,e_1+e_6,e_1+e_2+e_3\rangle$ \\

& $(4;1;0,2;1,2,3,4,4)$   &  $\langle e_4,e_3,e_2,e_1\rangle$ \\

& $(4;1;1,2;1,2,3,3,4)$   &  $\langle e_5+\lambda e_6,e_4,e_3,e_2\rangle$ \\

& $(4;1;1,3;1,1,2,3,4)$   &  $\langle e_5+\lambda e_6,e_4,e_2,e_1\rangle$ \\

& $(4;1;2,2;1,2,2,2,3)$   &  $\langle e_6,e_5,e_4,e_3\rangle$ \\

& $(4;1;2,3;1,1,2,2,3)$   &  $\langle e_6,e_5,e_4,e_2\rangle$ \\

& $(4;1;2,4;0,0,1,2,3)$   &  $\langle e_6,e_5,e_2,e_1\rangle$ \\

& $(4;2;0,2;1,2,2,3,3)$   &  $\langle e_4,e_3,e_2+e_4+e_5,e_1+e_2+e_3\rangle$ \\

& $(4;2;1,2;1,1,2,3,3)$   &  $\langle e_6,e_4,e_1,e_1+e_2+e_3\rangle$ \\

& $(4;2;1,3;0,1,2,3,3)$   &  $\langle e_6,e_3,e_2,e_1\rangle$ \\

& $(4;2;1,3;1,1,2,2,3)$   &  $\langle e_5,e_4,e_2,e_1+e_6\rangle$ \\

& $(4;2;2,3;0,0,1,2,3)$   &  $\langle e_6,e_5,e_2+e_4+e_5,e_1\rangle$ \\

& $(4;3;0,2;1,1,1,2,3)$   &  $\langle e_4,e_2+e_4+e_5,e_1+e_6,e_1+e_2+e_3\rangle$ \\

& $(4;3;1,2;0,0,1,2,3)$   &  $\langle e_6,e_2+e_4+e_5,e_1,e_1+e_2+e_3\rangle$ \\

& $(5;2;1,3;1,2,3,4,5)$   &  $\langle e_5+\lambda e_6,e_4,e_3,e_2,e_1\rangle$ \\

& $(5;2;2,3;1,2,3,3,4)$   &  $\langle e_6,e_5,e_4,e_3,e_2\rangle$ \\

& $(5;2;2,4;1,1,2,3,4)$   &  $\langle e_6,e_5,e_4,e_2,e_1\rangle$ \\

& $(5;3;1,3;1,2,2,3,4)$   &  $\langle e_4,e_3,e_2+e_4+e_5,e_1+e_6,e_1+e_2+e_3\rangle$ \\

& $(5;3;2,3;0,1,2,3,4)$   &  $\langle e_6,e_5,e_2+e_4+e_5,e_1,e_1+e_2+e_3\rangle$ \\

\hline $(N,4)$
& $(1;0,0;0,0;1,1,1,1)$   &  $\langle e_4\rangle$ \\

& $(1;0,0;0,1;0,0,1,1)$   &  $\langle e_2\rangle$ \\

& $(1;0,1;0,0;0,1,1,1)$   &  $\langle e_3+\lambda e_4\rangle$ \\

& $(1;0,1;1,1;0,0,0,0)$   &  $\langle e_5\rangle$ \\

& $(1;1,1;0,0;0,0,0,1)$   &  $\langle e_1+e_3+e_4\rangle$ \\

& $(2;0,1;0,0;1,2,2,2)$   &  $\langle e_4,e_3\rangle$ \\

& $(2;0,1;0,1;1,1,2,2)$   &  $\langle e_4,e_2\rangle$ \\

& $(2;0,1;0,2;0,0,1,2)$   &  $\langle e_2,e_1\rangle$ \\

& $(2;0,1;1,1;1,1,1,1)$   &  $\langle e_5,e_4\rangle$ \\

& $(2;0,1;1,2;0,0,1,1)$   &  $\langle e_5,e_2\rangle$ \\

& $(2;0,2;1,1;0,1,1,1)$   &  $\langle e_5,e_3+\lambda e_4\rangle$ \\

& $(2;1,1;0,0;1,1,1,2)$   &  $\langle e_4,e_1+e_3+e_4\rangle$ \\

& $(2;1,1;0,1;0,1,1,2)$   &  $\langle e_1,e_1+e_3+e_4\rangle$ \\

& $(2;1,2;0,0;0,1,1,2)$   &  $\langle e_3+\lambda e_4,e_1+e_3+e_4\rangle$ \\

& $(2;1,2;1,1;0,0,1,1)$   &  $\langle e_5,e_2+e_3+e_5\rangle$ \\

& $(2;2,2;0,0;0,0,0,1)$   &  $\langle e_2+e_3+e_5,e_1+e_3+e_4\rangle$ \\

& $(3;0,2;0,1;1,2,3,3)$   &  $\langle e_4,e_3,e_2\rangle$ \\

& $(3;0,2;0,2;1,1,2,3)$   &  $\langle e_4,e_2,e_1\rangle$ \\

& $(3;0,2;1,1;1,2,2,2)$   &  $\langle e_5,e_4,e_3\rangle$ \\

& $(3;0,2;1,2;1,1,2,2)$   &  $\langle e_5,e_4,e_2\rangle$ \\

& $(3;0,2;1,3;0,0,1,2)$   &  $\langle e_5,e_2,e_1\rangle$ \\

& $(3;1,2;0,1;1,2,2,3)$   &  $\langle e_4,e_3,e_1\rangle$ \\

& $(3;1,2;0,2;0,1,2,3)$   &  $\langle e_2,e_1,e_1+e_3+e_4\rangle$ \\

& $(3;1,2;1,1;1,1,2,2)$   &  $\langle e_5,e_4,e_2+e_3+e_5\rangle$ \\

& $(3;1,2;1,2;0,1,2,2)$   &  $\langle e_5,e_3,e_2\rangle$ \\

& $(3;1,3;1,1;0,1,2,2)$   &  $\langle e_5,e_3+\lambda e_4,e_2+e_3+e_5\rangle$ \\

& $(3;2,2;0,1;1,1,1,2)$   &  $\langle e_4,e_2+e_3+e_5,e_1+e_3+e_4\rangle$ \\

& $(3;2,3;0,1;0,1,1,2)$   &  $\langle e_3+\lambda e_4,e_2+e_3+e_5,e_1+e_3+e_4\rangle$ \\

& $(3;2,3;1,1;0,0,1,2)$   &  $\langle e_5,e_2+e_3+e_5,e_1+e_3+e_4\rangle$ \\

& $(4;1,3;0,2;1,2,3,4)$   &  $\langle e_4,e_3,e_2,e_1\rangle$ \\

& $(4;1,3;1,2;1,2,3,3)$   &  $\langle e_5,e_4,e_3,e_2\rangle$ \\

& $(4;1,3;1,3;1,1,2,3)$   &  $\langle e_5,e_4,e_2,e_1\rangle$ \\

& $(4;2,3;0,2;1,2,2,3)$   &  $\langle e_4,e_3,e_2+e_3+e_5,e_1\rangle$ \\

& $(4;2,3;1,2;1,1,2,3)$   &  $\langle e_5,e_4,e_2+e_3+e_5,e_1+e_3+e_4\rangle$ \\

& $(4;2,4;1,2;0,1,2,3)$   &  $\langle e_5,e_3+\lambda e_4,e_2+e_3+e_5,e_1+e_3+e_4\rangle$ \\

\end{longtable}

\end{lem}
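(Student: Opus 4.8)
I read the statement as the assertion that the displayed tables list, for each of the five extended representations $\pi_\lambda$ used in the proof of Theorem~\ref{mainthmFin}, all \emph{maximal} subdimension vectors --- i.e. a finite set of vectors $(\dim U;\dim(V_i\cap U))$, $U$ a proper nonzero subspace of $V$, such that every such vector is dominated by one on the list, where $d'$ is dominated by $d''$ when $d''_0\le d'_0$ and $d''_i\ge d'_i$ for all $i\ge1$. This is exactly the information needed to check $d^{\mathcal P}$-stability, because for a weight with positive entries the quantity $\lambda_\chi(\pi\cap U)=\frac{1}{d'_0}\sum_i\chi_i d'_i$ is monotone in this order (larger subdimension vectors give larger values, hence a stronger constraint).

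The one genuinely structural step I would isolate is a reduction to \emph{saturated} subspaces. Given an arbitrary proper $U\subseteq V$, set $U'':=\sum_i(V_i\cap U)$; then $\dim U''\le\dim U$, while $V_i\cap U''=V_i\cap U$ for every $i$ (the inclusion $\supseteq$ holds because $V_i\cap U\subseteq U''$, and $\subseteq$ because $U''\subseteq U$). Hence the subdimension vector of $U''$ dominates that of $U$, so every maximal subdimension vector is realized by a saturated $U$, one with $U=\sum_i(V_i\cap U)$. Such a $U$ is a sum of subspaces each lying inside some $V_i$; since (for $\lambda\notin\{0,1\}$) the subspace lattice generated by $V,V_1,\dots,V_n$ is finite and can be written out explicitly in each of the five cases, only finitely many combinatorial types of saturated $U$ arise.

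After this reduction the proof is a finite case analysis, carried out separately for the five posets. The realizability direction is immediate: the third column of each table exhibits a concrete $U$, and a short linear-algebra computation confirms $\dim(V_i\cap U)$ equals the listed entry; here one uses $\lambda\ne0,1$, and the rows whose $U$-column contains a deformed vector $e_a+\lambda e_b$ are precisely those for which some intersection dimension genuinely depends on $\lambda$. For completeness I would organize saturated $U$ by $k=\dim U$ with $1\le k\le\dim V-1$, enumerate the finitely many types for each $k$, compute all $\dim(V_i\cap U)$, keep the coordinatewise-maximal tuples for each $k$, and finally discard those dominated by an entry of smaller dimension; the list that remains should be the table.

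The hard part is purely the bookkeeping, and it is heaviest for $(1,2,5)$ (here $\dim V=6$ and the chain $(1,2,5)$ has eight elements) and for $(N,4)$. The two places to be careful are not overlooking a maximal tuple and not confusing the generic configuration with the degenerate loci $\lambda\in\{0,1\}$, where the lattice of subspaces changes and produces spurious intersection dimensions that must be excluded. A convenient consistency check, once the weight $\chi^{\mathcal P}=d^{\mathcal P}$ is fixed, is that every listed $d'$ satisfies $\frac{1}{d'_0}\sum_i\chi^{\mathcal P}_i d'_i<d^{\mathcal P}_0$ strictly; this inequality is verified in the body of the paper, and although it does not by itself certify completeness, any failure would flag an omission, and together with completeness it yields the $d^{\mathcal P}$-stability invoked in the proof of Theorem~\ref{mainthmFin}.
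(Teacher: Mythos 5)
The paper offers no argument for this lemma at all: the tables are simply exhibited and their completeness, together with the stability check, is declared routine. Your proposal therefore supplies structure where the paper supplies none, and the two genuine ideas you isolate are both correct and worth having: (i) the monotonicity of $\lambda_\chi(\pi\cap U)$ under the coordinate order on subdimension vectors, which is exactly the remark the paper makes at the end of Section 2 to justify restricting to maximal vectors, and (ii) the saturation step $U\mapsto U''=\sum_i(V_i\cap U)$, which preserves all the intersections while only decreasing $\dim U$, hence only improves the vector in the relevant order. This reduction is sound (with the trivial caveat that $U''=0$ forces the zero vector, which is dominated by every row), and it is the right way to make the enumeration finite in spirit.

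The one step you should tighten is the assertion that, after saturation, ``only finitely many combinatorial types of saturated $U$ arise'' because ``the subspace lattice generated by $V,V_1,\dots,V_n$ is finite.'' A saturated subspace is a sum of subspaces $W_i=V_i\cap U\subseteq V_i$, and these $W_i$ are in general arbitrary subspaces of the $V_i$, not elements of the lattice generated by the $V_j$; already for $(2,2,2)$ one has a one-parameter family of saturated lines inside $V_{b_2}=\langle e_1,e_2\rangle$. What saves the argument is a second domination step: a $W_i$ in generic position inside $V_i$ meets the other $V_j$ no more than a suitably chosen special (lattice) subspace of the same dimension does, so its subdimension vector is dominated by that of a lattice configuration. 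You need to say this explicitly, since it is the only reason the case analysis closes up into the finite tables. Beyond that, your proposal is at the same level of completeness as the paper itself --- neither actually carries out the enumeration for $(1,2,5)$ or $(N,4)$ --- and your final consistency check (every listed $d'$ must satisfy the strict stability inequality for $\chi^{\mathcal P}=d^{\mathcal P}$) is a sensible safeguard, correctly flagged as necessary but not sufficient for completeness.
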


It is routine to check that for the corresponding representations
stability  conditions (where weight is taken to be dimension) holds
for all maximal subdimension listed above, hence representation are
unitarizable.

\section*{Appendix D. Missing details in the proof of Theorem 2.}

\begin{lem}
The following lists contain all possible subdimension for
representations 1)-6) of the poset $\mathcal P_2$.

\begin{longtable}{c|p{4cm}|p{6cm}}
\bf{No.} & \bf{Subdimensions}

$d=(\dim U;\dim(V_i\cup U))$ & \bf{Subspace $U$}\\

\hline $1)$ & $(1;0,0;0,0;1,1,1)$   &  $\langle e_3\rangle$ \\

& $(1;0,0;0,1;0,0,1)$   &  $\langle e_1+e_2+e_3\rangle$ \\

& $(1;0,1;0,0;0,1,1)$   &  $\langle e_2\rangle$ \\

& $(1;0,1;1,1;0,0,0)$   &  $\langle e_4\rangle$ \\

& $(1;1,1;0,0;0,0,0)$   &  $\langle e_1+e_4\rangle$ \\

& $(2;0,1;0,0;1,2,2)$   &  $\langle e_3,e_2\rangle$ \\

& $(2;0,1;0,1;1,1,2)$   &  $\langle e_3,e_1+e_2+e_3\rangle$ \\

& $(2;0,1;1,1;1,1,1)$   &  $\langle e_4,e_3\rangle$ \\

& $(2;0,1;1,2;0,0,1)$   &  $\langle e_4,e_1+e_2+e_3\rangle$ \\

& $(2;0,2;0,0;0,1,2)$   &  $\langle e_2,e_1\rangle$ \\

& $(2;0,2;1,1;0,1,1)$   &  $\langle e_4,e_2\rangle$ \\

& $(2;1,1;0,0;1,1,1)$   &  $\langle e_3,e_1+e_4\rangle$ \\

& $(2;1,2;0,0;0,1,1)$   &  $\langle e_2,e_1+e_4\rangle$ \\

& $(2;1,2;1,1;0,0,1)$   &  $\langle e_4,e_1\rangle$ \\

& $(3;0,2;0,1;1,2,3)$   &  $\langle e_3,e_2,e_1\rangle$ \\

& $(3;0,2;1,1;1,2,2)$   &  $\langle e_4,e_3,e_2\rangle$ \\

& $(3;0,2;1,2;1,1,2)$   &  $\langle e_4,e_3,e_1+e_2+e_3\rangle$ \\

& $(3;1,2;0,1;1,2,2)$   &  $\langle e_3,e_2,e_1+e_4\rangle$ \\

& $(3;1,2;1,1;1,1,2)$   &  $\langle e_4,e_3,e_1\rangle$ \\

& $(3;1,2;1,2;0,1,2)$   &  $\langle e_4,e_1,e_1+e_2+e_3\rangle$ \\

& $(3;1,3;1,1;0,1,2)$   &  $\langle e_4,e_2,e_1\rangle$ \\

\hline $2)$ & $(1;0,0;0,0;1,1,1)$   &  $\langle e_3\rangle$ \\

& $(1;0,0;0,1;0,1,1)$   &  $\langle e_2+e_3\rangle$ \\

& $(1;0,1;0,0;0,1,1)$   &  $\langle e_2\rangle$ \\

& $(1;0,1;0,1;0,0,1)$   &  $\langle e_1+e_2\rangle$ \\

& $(1;0,1;1,1;0,0,0)$   &  $\langle e_4\rangle$ \\

& $(1;1,1;0,0;0,0,0)$   &  $\langle e_1+e_4\rangle$ \\

& $(2;0,1;0,1;1,2,2)$   &  $\langle e_3,e_2\rangle$ \\

& $(2;0,1;0,2;0,1,2)$   &  $\langle e_2+e_3,e_1+e_2\rangle$ \\

& $(2;0,1;1,1;1,1,1)$   &  $\langle e_4,e_3\rangle$ \\

& $(2;0,1;1,2;0,1,1)$   &  $\langle e_4,e_2+e_3\rangle$ \\

& $(2;0,2;0,1;0,1,2)$   &  $\langle e_2,e_1\rangle$ \\

& $(2;0,2;1,1;0,1,1)$   &  $\langle e_4,e_2\rangle$ \\

& $(2;0,2;1,2;0,0,1)$   &  $\langle e_4,e_1+e_2\rangle$ \\

& $(2;1,1;0,1;1,1,1)$   &  $\langle e_3,e_1+e_4\rangle$ \\

& $(2;1,2;0,1;0,1,1)$   &  $\langle e_2,e_1+e_4\rangle$ \\

& $(2;1,2;1,1;0,0,1)$   &  $\langle e_4,e_1\rangle$ \\

& $(3;0,2;0,2;1,2,3)$   &  $\langle e_3,e_2,e_1\rangle$ \\

& $(3;0,2;1,2;1,2,2)$   &  $\langle e_4,e_3,e_2\rangle$ \\

& $(3;0,2;1,3;0,1,2)$   &  $\langle e_4,e_2+e_3,e_1+e_2\rangle$ \\

& $(3;1,2;0,2;1,2,2)$   &  $\langle e_3,e_2,e_1+e_4\rangle$ \\

& $(3;1,2;1,2;1,1,2)$   &  $\langle e_4,e_3,e_1\rangle$ \\

& $(3;1,3;1,2;0,1,2)$   &  $\langle e_4,e_2,e_1\rangle$ \\

\hline $3)$ & $(1;0,0;0,1;0,1,1)$   &  $\langle e_2+e_4\rangle$ \\

& $(1;0,1;0,0;1,1,1)$   &  $\langle e_3\rangle$ \\

& $(1;0,1;0,1;0,0,1)$   &  $\langle e_1+e_2+e_3\rangle$ \\

& $(1;0,1;1,1;0,0,0)$   &  $\langle e_5\rangle$ \\

& $(1;1,1;0,0;0,0,1)$   &  $\langle e_1\rangle$ \\

& $(2;0,1;0,0;2,2,2)$   &  $\langle e_4,e_3\rangle$ \\

& $(2;0,1;0,1;1,2,2)$   &  $\langle e_4,e_2\rangle$ \\

& $(2;0,1;0,2;0,1,2)$   &  $\langle e_2+e_4,e_1+e_2+e_3\rangle$ \\

& $(2;0,1;1,2;0,1,1)$   &  $\langle e_5,e_2+e_4\rangle$ \\

& $(2;0,2;0,0;1,2,2)$   &  $\langle e_3,e_2\rangle$ \\

& $(2;0,2;0,1;1,1,2)$   &  $\langle e_3,e_1+e_2+e_3\rangle$ \\

& $(2;0,2;1,1;1,1,1)$   &  $\langle e_5,e_3\rangle$ \\

& $(2;0,2;1,2;0,0,1)$   &  $\langle e_5,e_1+e_2+e_3\rangle$ \\

& $(2;1,1;0,1;1,1,1)$   &  $\langle e_4,e_2+e_5\rangle$ \\

& $(2;1,2;0,0;1,1,2)$   &  $\langle e_3,e_1\rangle$ \\

& $(2;1,2;0,1;0,1,2)$   &  $\langle e_1,e_1+e_2+e_3\rangle$ \\

& $(2;1,2;1,1;0,1,1)$   &  $\langle e_5,e_2\rangle$ \\

& $(2;2,2;0,0;0,0,1)$   &  $\langle e_2+e_5,e_1\rangle$ \\

& $(3;0,2;0,1;2,3,3)$   &  $\langle e_4,e_3,e_2\rangle$ \\

& $(3;0,2;1,1;2,2,2)$   &  $\langle e_5,e_4,e_3\rangle$ \\

& $(3;0,2;1,3;0,1,2)$   &  $\langle e_5,e_2+e_4,e_1+e_2+e_3\rangle$ \\

& $(3;1,2;0,1;2,2,3)$   &  $\langle e_4,e_3,e_1\rangle$ \\

& $(3;1,2;0,2;1,2,3)$   &  $\langle e_2+e_4,e_1,e_1+e_2+e_3\rangle$ \\

& $(3;1,2;1,2;1,2,2)$   &  $\langle e_5,e_4,e_2\rangle$ \\

& $(3;1,3;0,1;1,2,3)$   &  $\langle e_3,e_2,e_1\rangle$ \\

& $(3;1,3;1,1;1,2,2)$   &  $\langle e_5,e_3,e_2\rangle$ \\

& $(3;1,3;1,2;1,1,2)$   &  $\langle e_5,e_3,e_1+e_2+e_3\rangle$ \\

& $(3;2,3;0,1;1,1,2)$   &  $\langle e_3,e_2+e_5,e_1\rangle$ \\

& $(3;2,3;1,1;0,1,2)$   &  $\langle e_5,e_2,e_1\rangle$ \\

& $(4;1,3;0,2;2,3,4)$   &  $\langle e_4,e_3,e_2,e_1\rangle$ \\

& $(4;1,3;1,2;2,3,3)$   &  $\langle e_5,e_4,e_3,e_2\rangle$ \\

& $(4;1,3;1,3;1,2,3)$   &  $\langle e_5,e_4,e_2,e_1+e_2+e_3\rangle$ \\

& $(4;2,3;0,2;2,2,3)$   &  $\langle e_4,e_3,e_2+e_5,e_1\rangle$ \\

& $(4;2,4;1,2;1,2,3)$   &  $\langle e_5,e_3,e_2,e_1\rangle$ \\

\hline $4)$ & $(1;0,0;0,0;1,1,1)$   &  $\langle e_4\rangle$ \\

& $(1;0,0;0,1;0,1,1)$   &  $\langle e_2+e_3+e_4\rangle$ \\

& $(1;0,1;0,0;0,1,1)$   &  $\langle e_3\rangle$ \\

& $(1;0,1;0,1;0,0,1)$   &  $\langle e_1+e_3\rangle$ \\

& $(1;0,1;1,1;0,0,0)$   &  $\langle e_5\rangle$ \\

& $(1;1,1;0,0;0,0,1)$   &  $\langle e_1\rangle$ \\

& $(2;0,1;0,1;1,2,2)$   &  $\langle e_4,e_2+e_3+e_4\rangle$ \\

& $(2;0,1;0,2;0,1,2)$   &  $\langle e_2+e_3+e_4,e_1+e_3\rangle$ \\

& $(2;0,1;1,1;1,1,1)$   &  $\langle e_5,e_4\rangle$ \\

& $(2;0,1;1,2;0,1,1)$   &  $\langle e_5,e_2+e_3+e_4\rangle$ \\

& $(2;0,2;0,0;0,2,2)$   &  $\langle e_3,e_2\rangle$ \\

& $(2;0,2;1,2;0,0,1)$   &  $\langle e_5,e_1+e_3\rangle$ \\

& $(2;1,1;0,0;1,1,2)$   &  $\langle e_4,e_1\rangle$ \\

& $(2;1,2;0,1;0,1,2)$   &  $\langle e_3,e_1\rangle$ \\

& $(2;1,2;1,1;0,1,1)$   &  $\langle e_5,e_2\rangle$ \\

& $(2;2,2;0,0;0,0,1)$   &  $\langle e_2+e_5,e_1\rangle$ \\

& $(3;0,2;0,1;1,3,3)$   &  $\langle e_4,e_3,e_2\rangle$ \\

& $(3;0,2;0,2;1,2,3)$   &  $\langle e_4,e_2+e_3+e_4,e_1+e_3\rangle$ \\

& $(3;0,2;1,2;1,2,2)$   &  $\langle e_5,e_4,e_2+e_3+e_4\rangle$ \\

& $(3;0,2;1,3;0,1,2)$   &  $\langle e_5,e_2+e_3+e_4,e_1+e_3\rangle$ \\

& $(3;1,2;0,1;1,2,3)$   &  $\langle e_4,e_3,e_1\rangle$ \\

& $(3;1,2;0,2;0,2,3)$   &  $\langle e_3,e_2+e_3+e_4,e_1\rangle$ \\

& $(3;1,2;1,1;1,2,2)$   &  $\langle e_5,e_4,e_2\rangle$ \\

& $(3;1,2;1,2;0,2,2)$   &  $\langle e_5,e_2,e_2+e_3+e_4\rangle$ \\

& $(3;1,3;0,1;0,2,3)$   &  $\langle e_3,e_2,e_1\rangle$ \\

& $(3;1,3;1,1;0,2,2)$   &  $\langle e_5,e_3,e_2\rangle$ \\

& $(3;1,3;1,2;0,1,2)$   &  $\langle e_5,e_3,e_1\rangle$ \\

& $(3;2,2;0,1;1,1,2)$   &  $\langle e_4,e_2+e_5,e_1\rangle$ \\

& $(3;2,3;1,1;0,1,2)$   &  $\langle e_5,e_2,e_1\rangle$ \\

& $(4;1,3;0,2;1,3,4)$   &  $\langle e_4,e_3,e_2,e_1\rangle$ \\

& $(4;1,3;1,2;1,3,3)$   &  $\langle e_5,e_4,e_3,e_2\rangle$ \\

& $(4;1,3;1,3;1,2,3)$   &  $\langle e_5,e_4,e_2+e_3+e_4,e_1+e_3\rangle$ \\

& $(4;2,3;1,2;1,2,3)$   &  $\langle e_5,e_4,e_2,e_1\rangle$ \\

& $(4;2,4;1,2;0,2,3)$   &  $\langle e_5,e_3,e_2,e_1\rangle$ \\

\hline $5)$ & $(1;0,0;0,0;1,1,1)$   &  $\langle e_4\rangle$ \\

& $(1;0,1;0,0;0,1,1)$   &  $\langle e_3\rangle$ \\

& $(1;0,1;0,1;0,0,1)$   &  $\langle e_1+e_2+e_3\rangle$ \\

& $(1;0,1;1,1;0,0,0)$   &  $\langle e_5\rangle$ \\

& $(1;1,1;0,0;0,0,1)$   &  $\langle e_1\rangle$ \\

& $(2;0,1;0,0;1,2,2)$   &  $\langle e_4,e_3\rangle$ \\

& $(2;0,1;0,2;0,0,2)$   &  $\langle e_1+e_4,e_1+e_2+e_3\rangle$ \\

& $(2;0,1;1,1;1,1,1)$   &  $\langle e_5,e_4\rangle$ \\

& $(2;0,2;0,1;0,1,2)$   &  $\langle e_3,e_1+e_2+e_3\rangle$ \\

& $(2;0,2;1,1;0,1,1)$   &  $\langle e_5,e_3\rangle$ \\

& $(2;0,2;1,2;0,0,1)$   &  $\langle e_5,e_1+e_2+e_3\rangle$ \\

& $(2;1,1;0,1;1,1,2)$   &  $\langle e_4,e_1\rangle$ \\

& $(2;1,2;0,0;0,1,2)$   &  $\langle e_3,e_1\rangle$ \\

& $(2;1,2;0,1;0,0,2)$   &  $\langle e_1,e_1+e_2+e_3\rangle$ \\

& $(2;1,2;1,1;0,0,1)$   &  $\langle e_5,e_2\rangle$ \\

& $(2;2,2;0,0;0,0,1)$   &  $\langle e_2+e_5,e_1\rangle$ \\

& $(3;0,2;1,1;1,2,2)$   &  $\langle e_5,e_4,e_3\rangle$ \\

& $(3;0,2;1,3;0,0,2)$   &  $\langle e_5,e_1+e_4,e_1+e_2+e_3\rangle$ \\

& $(3;1,2;0,1;1,2,3)$   &  $\langle e_4,e_3,e_1\rangle$ \\

& $(3;1,2;0,2;1,1,3)$   &  $\langle e_4,e_1,e_1+e_2+e_3\rangle$ \\

& $(3;1,2;1,2;1,1,2)$   &  $\langle e_5,e_4,e_1\rangle$ \\

& $(3;1,3;0,1;0,1,3)$   &  $\langle e_3,e_2,e_1\rangle$ \\

& $(3;1,3;1,2;0,1,2)$   &  $\langle e_5,e_3,e_1+e_2+e_3\rangle$ \\

& $(3;2,2;0,1;1,1,2)$   &  $\langle e_4,e_2+e_5,e_1\rangle$ \\

& $(3;2,3;0,1;0,1,2)$   &  $\langle e_3,e_2+e_5,e_1\rangle$ \\

& $(3;2,3;1,1;0,0,2)$   &  $\langle e_5,e_2,e_1\rangle$ \\

& $(4;1,3;0,2;1,2,4)$   &  $\langle e_4,e_3,e_2,e_1\rangle$ \\

& $(4;1,3;1,2;1,2,3)$   &  $\langle e_5,e_4,e_3,e_2\rangle$ \\

& $(4;1,3;1,3;1,1,3)$   &  $\langle e_5,e_4,e_1,e_1+e_2+e_3\rangle$ \\

& $(4;2,3;0,2;1,2,3)$   &  $\langle e_4,e_3,e_2+e_5,e_1\rangle$ \\

& $(4;2,3;1,2;1,1,3)$   &  $\langle e_5,e_4,e_2,e_1\rangle$ \\

& $(4;2,4;1,2;0,1,3)$   &  $\langle e_5,e_3,e_2,e_1\rangle$ \\

\hline $6)$ & $(1;0,0;0,0;1,1,1)$   &  $\langle e_3\rangle$ \\

& $(1;0,0;0,1;0,0,1)$   &  $\langle e_1+e_2+e_3\rangle$ \\

& $(1;0,1;0,0;0,1,1)$   &  $\langle e_2\rangle$ \\

& $(1;0,1;1,1;0,0,0)$   &  $\langle e_5\rangle$ \\

& $(1;1,1;0,0;0,0,0)$   &  $\langle e_4\rangle$ \\

& $(2;0,1;0,0;1,2,2)$   &  $\langle e_3,e_2\rangle$ \\

& $(2;0,1;0,1;1,1,2)$   &  $\langle e_3,e_1+e_2+e_3\rangle$ \\

& $(2;0,1;1,1;1,1,1)$   &  $\langle e_5,e_3\rangle$ \\

& $(2;0,1;1,2;0,0,1)$   &  $\langle e_5,e_1+e_2+e_3\rangle$ \\

& $(2;0,2;0,0;0,1,2)$   &  $\langle e_2,e_1\rangle$ \\

& $(2;0,2;1,1;0,1,1)$   &  $\langle e_5,e_2\rangle$ \\

& $(2;0,2;1,2;0,0,0)$   &  $\langle e_5,e_2+e_4\rangle$ \\

& $(2;1,1;0,0;1,1,1)$   &  $\langle e_4,e_3\rangle$ \\

& $(2;1,2;0,1;0,1,1)$   &  $\langle e_4,e_2\rangle$ \\

& $(2;1,2;1,1;0,0,1)$   &  $\langle e_5,e_1\rangle$ \\

& $(2;2,2;0,0;0,0,0)$   &  $\langle e_4,e_1+e_5\rangle$ \\

& $(3;0,2;0,1;1,2,3)$   &  $\langle e_3,e_2,e_1\rangle$ \\

& $(3;0,2;1,1;1,2,2)$   &  $\langle e_5,e_3,e_2\rangle$ \\

& $(3;0,2;1,2;1,1,2)$   &  $\langle e_5,e_3,e_1+e_2+e_3\rangle$ \\

& $(3;0,2;1,3;0,0,1)$   &  $\langle e_5,e_2+e_4,e_1+e_2+e_3\rangle$ \\

& $(3;1,2;0,1;1,2,2)$   &  $\langle e_4,e_3,e_2\rangle$ \\

& $(3;1,2;1,1;1,1,2)$   &  $\langle e_5,e_3,e_1\rangle$ \\

& $(3;1,2;1,2;0,1,2)$   &  $\langle e_5,e_1,e_1+e_2+e_3\rangle$ \\

& $(3;1,3;1,1;0,1,2)$   &  $\langle e_5,e_2,e_1\rangle$ \\

& $(3;1,3;1,2;0,1,1)$   &  $\langle e_5,e_4,e_2\rangle$ \\

& $(3;2,2;0,1;1,1,1)$   &  $\langle e_4,e_3,e_1+e_5\rangle$ \\

& $(3;2,3;0,1;0,1,1)$   &  $\langle e_4,e_2,e_1+e_5\rangle$ \\

& $(3;2,3;1,1;0,0,1)$   &  $\langle e_5,e_4,e_1\rangle$ \\

& $(4;1,3;1,2;1,2,3)$   &  $\langle e_5,e_3,e_2,e_1\rangle$ \\

& $(4;1,3;1,3;1,1,2)$   &  $\langle e_5,e_3,e_2+e_4,e_1+e_2+e_3\rangle$ \\

& $(4;2,3;0,2;1,2,2)$   &  $\langle e_4,e_3,e_2,e_1+e_5\rangle$ \\

& $(4;2,3;1,2;1,1,2)$   &  $\langle e_5,e_4,e_3,e_1\rangle$ \\

& $(4;2,4;1,2;0,1,2)$   &  $\langle e_5,e_4,e_2,e_1\rangle$ \\

\end{longtable}

\end{lem}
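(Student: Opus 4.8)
The plan is to treat the six representations one at a time. Fix one of them, $\pi=(V;V_1,\dots,V_7)$ of $\mathcal P_2$ from the list in Appendix~A, so $V=\mathbb C^{k}$ with $k\in\{4,5\}$ and every $V_i$ is spanned by a few of the vectors $e_1,\dots,e_k$ together with a bounded number of fixed combinations (such as $e_1+e_2+e_3$ or $e_2+e_4$); in particular the configuration carries no moduli. Two things must be checked. \emph{Correctness}: for each row of the table the subspace $U$ displayed in the last column realizes the stated triple $d=(\dim U;\dim(V_1\cap U),\dots,\dim(V_7\cap U))$, and this triple is maximal in the coordinatewise order among those with the given value of $\dim U$. \emph{Completeness}: conversely, every maximal realizable subdimension vector occurs in the table. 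Correctness is a routine finite verification done row by row --- each exhibited $U$ is spanned by coordinate-type vectors, so $V_i\cap U$ is the solution space of a small homogeneous linear system and its dimension is read off directly.

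For completeness I would stratify the subspaces $U\subseteq V$ by $d_0=\dim U\in\{1,\dots,k-1\}$ and, for each $d_0$, first bound the admissible tuples $(d_1,\dots,d_7)$ with $d_i=\dim(V_i\cap U)$ by the elementary inequalities: $0\le d_i\le\min(d_0,\dim V_i)$; $d_i\le d_j$ whenever $i\prec j$ in $\mathcal P_2$ (because then $V_i\subseteq V_j$, so $V_i\cap U\subseteq V_j\cap U$); and, for incomparable $i,j$, the standard dimension inequality $\dim(V_i\cap U)+\dim(V_j\cap U)\le\dim((V_i+V_j)\cap U)+\dim(V_i\cap V_j\cap U)$, together with its iterates over the lattice generated by $V_1,\dots,V_7$. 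These constraints already cut the candidate tuples for each $d_0$ down to a short list; for each candidate on the list I would then either produce a realizing subspace --- which turns out always to be one of those tabulated --- or rule it out by a dimension count (typically: $V_i\cap U+V_j\cap U\subseteq(V_i+V_j)\cap U$ forces more dimensions into $U$, or into $V_i+V_j$, than there is room for). Finally, for each fixed $d_0$ one deletes the realizable vectors that are coordinatewise dominated by another, and collects what remains over all $d_0$; vectors with distinct $d_0$ are never comparable for the stability inequality $\tfrac1{d_0}\sum_i\chi_id_i<\tfrac1{k}\sum_i\chi_i\dim V_i$, so all of them are kept, and one obtains exactly the tabulated list.

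The only genuine difficulty is organizational: there are six representations and, for each, several values of $d_0$ and several positions of $U$ in special position to consider, so the real work lies in being certain that no maximal vector has been overlooked. This can be made transparent as follows. For each of these (rigid) representations a direct computation shows that $V_1,\dots,V_7$ together with $0$ and $V$ generate a \emph{finite} lattice $L$ of subspaces under $+$ and $\cap$. Then $U\mapsto(\dim(W\cap U))_{W\in L}$ takes only finitely many values, and each value is attained on a locally closed stratum of $\textrm{Gr}(d_0,V)$; enumerating these strata --- equivalently, enumerating the possible intersection profiles --- is a finite mechanical task whose output is precisely the asserted list. Carrying out this enumeration for the six representations is exactly the bookkeeping recorded in the table, and it is what makes the stability checks used in the proof of Theorem~\ref{mainthmUnit} effective.
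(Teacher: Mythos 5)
Your plan is essentially the paper's own treatment: the paper gives no argument for this lemma beyond asserting that listing the maximal subdimension vectors is a routine finite check, and your row-by-row verification of each tabulated $U$ together with the enumeration over each fixed $\dim U$ (monotonicity $d_i\le d_j$ for $i\prec j$, the submodular bounds, then pruning coordinatewise-dominated vectors) is exactly that check made systematic, with the correct convention that maximality is taken separately for each value of $\dim U$. The only caveat is the one that applies equally to the paper: the case-by-case bookkeeping is described rather than executed, but no further idea is needed to complete it.
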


\end{document}